\documentclass[reqno]{amsart}

\usepackage[T1]{fontenc}
\usepackage[utf8]{inputenc}
\usepackage[english]{babel}
\usepackage{lmodern}
\usepackage{microtype}
\usepackage{amsmath,amssymb,amsthm,mathtools}
\usepackage{booktabs}
\usepackage{tabularx}
\usepackage{needspace}
\usepackage[table]{xcolor}
\usepackage[hidelinks]{hyperref}

\definecolor{definitionblue}{rgb}{0.00,0.20,0.70}
\definecolor{tableshade}{gray}{0.95}
\newcommand{\defin}[1]{%
  \relax\ifmmode
    \textcolor{definitionblue}{#1}%
  \else
    \textcolor{definitionblue}{\emph{#1}}%
  \fi
}

\newtheorem{theorem}{Theorem}[section]
\newtheorem{lemma}[theorem]{Lemma}

\numberwithin{equation}{section}

\DeclareMathOperator{\des}{des}
\DeclareMathOperator{\asc}{asc}
\DeclareMathOperator{\run}{run}
\DeclareMathOperator{\pk}{pk}
\newcommand{\had}{\mathbin{\odot}}
\newcommand{\oeis}[1]{\href{https://oeis.org/#1}{#1}}
\newcommand{\doi}[1]{\href{https://doi.org/#1}{\nolinkurl{doi:#1}}}
\newcommand{\arxiv}[1]{%
  \href{https://arxiv.org/abs/#1}{\nolinkurl{arXiv:#1}}}

\title[Real-rooted Eulerian polynomials]{Real-rooted Eulerian polynomials
  from permutations, words, and paths}
\author[Per Alexandersson]{Per Alexandersson}
\address{Department of Mathematics, Stockholm University,
  SE-106 91 Stockholm, Sweden}
\email{per.w.alexandersson@gmail.com}
\subjclass[2020]{05A05, 05A15, 26C10}
\keywords{descent polynomial, derangement, cyclic descent,
  peak polynomial, super-Eulerian polynomial, real-rooted polynomial,
  interlacing}
\hypersetup{
  pdftitle={Real-rooted Eulerian polynomials from permutations, words,
    and paths},
  pdfauthor={Per Alexandersson},
  pdfsubject={Real-rootedness proofs for Eulerian-type polynomial families},
  pdfkeywords={Eulerian polynomial, descent polynomial, real-rootedness,
    interlacing}
}

\begin{document}
\raggedbottom

\begin{abstract}
  We study six Eulerian-type polynomial families.  We prove that the
  descent polynomials of derangements are real-rooted, settling the
  derangement part of a conjecture of S.~Fu, Z.~Lin, and J.~Zeng.  The proof
  uses a compatible-pair recursion and finite-symbol stability.  We also
  resolve the real-rootedness conjecture in OEIS
  \oeis{A335340}, strengthen the known rowwise real-rootedness of an even-top
  descent family to consecutive strict interlacing, and prove real-rootedness,
  consecutive interlacing, and real-rooted gamma-polynomials for U.~Shankar's
  super-Eulerian polynomials.  A differential recurrence gives consecutive
  weak interlacing for ternary words counted by increasing runs.
  Finally, we prove stability of the peak-value refinement and consecutive
  interleaving of its positive weighted diagonals, settling a conjecture of
  P.~Alexandersson and O.~Nabawanda.
\end{abstract}

\maketitle

\section{Introduction}
\label{sec:overview}

For a finite set of permutations or words, the distribution of a descent-like
statistic often has positive, symmetric, and unimodal coefficients.  The
stronger property of real-rootedness explains much of this behavior at once,
but it is sensitive to the combinatorial model and usually requires more
structure than an explicit coefficient formula.

Our main result concerns derangements.  S.~Fu, Z.~Lin, and J.~Zeng conjectured
that their descent polynomials are real-rooted
\cite[Conjecture~5.2]{FuLinZeng2018}.  Theorem~\ref{thm:derangementRealRooted}
proves this assertion using a compatible-pair recursion through Eulerian
operators; finite-symbol stability supplies the required interlacing
orientation.  The same method embeds the reciprocal derangement polynomials
in a larger deletion-stable real-rooted family.

For the remaining five families, some results are new and others follow from
earlier work.  Expressing the
cyclic-descent polynomial of northeast paths as a derivative of a type~\(B\)
Narayana polynomial proves Theorem~\ref{thm:cyclicDescents} and settles the
explicit real-rootedness conjecture in OEIS \oeis{A335340}.  For descents whose
top is even, rowwise real-rootedness already follows from work of J.~T.~Hall
and J.~B.~Remmel together with that of J.~Haglund, K.~Ono, and D.~G.~Wagner.
Two differential recurrences prove the apparently unrecorded strict
interlacing of every consecutive pair in
Theorem~\ref{thm:evenTopInterlacing}.

U.~Shankar introduced a power deformation of the Eulerian recurrence and
proved that its rows are palindromic and log-concave.  His finite data
suggested real-rootedness.  Theorem~\ref{thm:superEulerian} proves this for
every positive integral power, gives consecutive interlacing, and extends
his gamma-positivity theorem from the square case to all positive integral
powers.  The proof combines a normalization by powers of binomial
coefficients with Hadamard-product theorems of P\'olya--Schur--Wagner and
Garloff--Wagner.

G.~Cabrele proved the enumeration of ternary words by increasing runs; its
differential recurrence yields the consecutive weak interlacing in
Theorem~\ref{thm:ternaryRuns}, which appears not to have been recorded.

We also consider the ordinary permutation peak polynomials, indexed by OEIS
\oeis{A008303}.  D.~Warren and E.~Seneta proved their real-rootedness and
consecutive interlacing \cite{WarrenSeneta1996}.  Both statements now have a
direct Lean formalization.  In Theorem~\ref{thm:peakValueStability}, we prove
that the natural multivariate refinement by peak values is stable and that its
positive weighted diagonals interleave consecutively, settling Alexandersson
and Nabawanda's conjecture.

\Needspace{5\baselineskip}
For the first four families below, we write
\(\defin{D_n(t)},\defin{C_n(t)},\defin{R_n(t)},\defin{E_n^{(l)}(t)}\);
for the final two, we write \(\defin{W_n(t)},\defin{K_n(t)}\).
Table~\ref{tab:familySummary} gives representative initial rows together with
their recurrences.  The OEIS entries identify the coefficient arrays; the
corresponding sections cite the primary sources for the recurrences.

\begin{table}[ht]
  \centering
  \footnotesize
  \setlength{\tabcolsep}{4pt}
  \renewcommand{\arraystretch}{1.12}
  \begin{tabularx}{\textwidth}{@{}
      >{\raggedright\arraybackslash}p{0.18\textwidth}lX@{}}
    \toprule
    Family & OEIS & Initial rows and recurrence \\
    \midrule
    Derangement descents
      & \oeis{A219836}
      & \(D_1=0\), \(D_2=t\), \(D_3=2t\). \\
      & & \centering
        \(\displaystyle
        D_n=(1+(n-1)t)D_{n-1}+t(1-t)D_{n-1}'
        +(-1)^nt^{n-1}\quad(n\geq2).\) \tabularnewline[3pt]
    \rowcolor{tableshade}
    Cyclic path descents
      & \oeis{A335340}
      & \(C_1=2t\), \(C_2=4t+2t^2\),
        \(C_3=6t+12t^2+2t^3\). \\
    \rowcolor{tableshade}
      & & \centering
        \(\displaystyle
        C_n=\frac{2t}{n}\frac{d}{dt}
        \sum_{k=0}^n\binom nk^2t^k\quad(n\geq1).\)
        \tabularnewline[3pt]
    Even-top descents
      & \oeis{A134434}
      & \(R_2=1+t\), \(R_3=4+2t\). \\
      & & \centering
        \(\displaystyle
        \begin{aligned}
          R_{2n+1}&=(1-t)R_{2n}'+(2n+1)R_{2n},\\
          R_{2n+2}&=t(1-t)R_{2n+1}'
            +(1+(2n+1)t)R_{2n+1}
          \qquad(n\geq1).
        \end{aligned}\) \tabularnewline[3pt]
    \rowcolor{tableshade}
    Super-Eulerian polynomials
      & ---
      & \(E_1^{(2)}=1\), \(E_2^{(2)}=1+t\),
        \(E_3^{(2)}=1+8t+t^2\). \\
    \rowcolor{tableshade}
      & & \centering
        \(\displaystyle
        E_{n,k}^{(l)}=(k+1)^lE_{n-1,k}^{(l)}
        +(n-k)^lE_{n-1,k-1}^{(l)}\quad(n\geq2).\)
        \tabularnewline[3pt]
    Ternary increasing runs
      & \oeis{A120987}
      & \(W_0=1\), \(W_1=3t\), \(W_2=3t+6t^2\). \\
      & & \centering
        \(\displaystyle
        (n+1)W_{n+1}=(-n+(4n+3)t)W_n
        +3t(1-t)W_n'\quad(n\geq0).\) \tabularnewline[3pt]
    \rowcolor{tableshade}
    Permutation peaks
      & \oeis{A008303}
      & \(K_1=1\), \(K_2=2\), \(K_3=4+2t\),
        \(K_4=8+16t\). \\
    \rowcolor{tableshade}
      & & \centering
        \(\displaystyle
        K_n=(2+(n-2)t)K_{n-1}
        +2t(1-t)K_{n-1}'\quad(n\geq2).\) \tabularnewline
    \bottomrule
  \end{tabularx}
  \caption{Initial rows and recurrences for the six families.}
  \label{tab:familySummary}
\end{table}

\Needspace{6\baselineskip}
The paper is organized as follows.  Section~\ref{sec:derangement} contains the
self-contained derangement argument and its larger deletion-stable family.
Sections~\ref{sec:cyclic} and~\ref{sec:evenTop} give two shorter applications,
and Sections~\ref{sec:superEulerian} and~\ref{sec:furtherFamilies} treat the
three remaining families.  We conclude with acknowledgements.

\section{Derangement descent polynomials}
\label{sec:derangement}

We begin with the derangement part of the Fu--Lin--Zeng conjecture.  Our proof
reciprocates the descent polynomial and retains both constant completions of
the resulting Eulerian recurrence.  The finite symbol of a lowering operator
supplies the required orientation.
The corresponding descent triangle is indexed by OEIS \oeis{A219836}
\cite{OEISA219836}.

For \(n\geq1\), let \(\defin{\mathfrak S_n}\) denote the symmetric group.  If
\(\pi=\pi_1\dotsm\pi_n\in\mathfrak S_n\), we write \(\defin{\des(\pi)}\) and
\(\defin{\asc(\pi)}\) for the numbers of indices \(1\leq i<n\) for which
\(\pi_i>\pi_{i+1}\) and \(\pi_i<\pi_{i+1}\), respectively.

We write \(\defin{\mathfrak D_n}\subseteq\mathfrak S_n\) for the set of
derangements, that is, fixed-point-free permutations, and define
the \defin{derangement descent polynomial} by
\[
  \defin{D_n(t)}
  \coloneqq
  \sum_{\pi\in\mathfrak D_n} t^{\des(\pi)}.
\]
\Needspace{5\baselineskip}
Fu, Lin, and Zeng conjectured that these polynomials are real-rooted
\cite[Conjecture~5.2]{FuLinZeng2018}.  Theorem~\ref{thm:derangementRealRooted}
proves the derangement part of their conjecture.
Here descents are essential: this distribution is different from the
classical derangement polynomial defined using excedances.

\begin{theorem}
  \label{thm:derangementRealRooted}
  For every \(n\geq2\), the polynomial \(D_n(t)\) has only nonpositive
  real zeros.
\end{theorem}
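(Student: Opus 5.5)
We plan to follow the strategy announced in the introduction: reciprocate, split the resulting recurrence into two homogeneous Eulerian-type recurrences, and carry an interlacing pair through an operator whose finite symbol fixes the orientation.

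\emph{Step 1: reciprocate.} Put \(\widetilde D_n(t)=t^{n-1}D_n(1/t)\), which is the ascent polynomial of derangements. From the recurrence in Table~\ref{tab:familySummary} we compute a recurrence for \(\widetilde D_n\). The operator \(1+(n-1)t+t(1-t)\frac{d}{dt}\) is invariant under \(f\mapsto t^{\deg}f(1/t)\) up to a degree shift, so the homogeneous part keeps its Eulerian form. The inhomogeneous term \((-1)^nt^{n-1}\) becomes the constant \((-1)^n\).

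\emph{Step 2: absorb the constant.} The sign \((-1)^n\) obstructs any direct interlacing argument. We will write \(\widetilde D_n = A_n + (-1)^n B_n\), or more naturally track the pair \(P_n=\widetilde D_n\) and \(Q_n=\widetilde D_n\pm(\text{Eulerian polynomial})\). Both constant completions of the recurrence are then homogeneous and generated by Eulerian operators. Combinatorially, derangements together with permutations having exactly one fixed point at a prescribed place satisfy \(d_n+d_{n-1}\)-type identities. The aim is a \emph{compatible pair} \((P_n,Q_n)\), with \(Q_n\) interlacing \(P_n\), such that
\[
  P_{n+1}=T_n P_n + c\,Q_n,\qquad Q_{n+1}=T_n' P_n+\dots,
\]
where all coefficients are nonnegative and each \(T\) is an operator of the form \(a+bt+t(1-t)\frac{d}{dt}\). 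For such operators we will invoke the standard fact that they preserve real-rootedness in \([ -\infty,0]\) and map interlacing pairs to interlacing pairs. This is the Haglund--Ono--Wagner and Brändén type result on \(a(t)f+b(t)f'\) with a suitable sign of the Wronskian. The statement follows because nonnegative combinations of polynomials with a common interleaver stay real-rooted.

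\emph{Step 3: orientation.} The main obstacle is proving that the required interlacing direction, i.e.\ which of \(P_n\) and \(Q_n\) has the leftmost zero, is preserved at every step. We will handle this through the finite symbol. We regard the lowering part \(t(1-t)\frac{d}{dt}\) acting on degree-\(\le m\) polynomials as a two-variable stable symbol \(G(t,w)=\) (operator applied to \((t+w)^m\)), and check that \(G\) is real stable by Borcea--Brändén. Stability of \(G\) forces the operator to preserve the proper-position relation, and hence the orientation. If that is awkward, the fallback is a direct sign check of the Wronskian \(P_n'Q_n-P_nQ_n'\) at the zeros of \(P_n\). Base cases \(n=2,3\) are checked by hand (\(D_2=t\), \(D_3=2t\)). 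Positivity of the coefficients then gives nonpositive zeros, and reciprocating back yields the theorem for \(D_n\). The zero at \(t=0\) and degree drops need only bookkeeping.
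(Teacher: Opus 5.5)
Your outline has the right ingredients: reciprocation, keeping two constant completions, and a finite-symbol argument for orientation. But the construction that carries the proof is never specified, and the one concrete suggestion in Step~2 cannot work. Adding or subtracting any solution of the homogeneous recurrence, such as the Eulerian polynomials with \(A_n=T_nA_{n-1}\), leaves the term \((-1)^n\) untouched, because \(\widetilde D_n\pm A_n\) satisfies the same inhomogeneous recurrence. So the system \(P_{n+1}=T_nP_n+cQ_n\), \(Q_{n+1}=T_n'P_n+\dotsb\) is never produced, and there is nothing to run the induction on.

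The constant is absorbed only by combining two consecutive steps. Since \(\widetilde D_{n-1}+(-1)^n=T_{n-1}\widetilde D_{n-2}\), writing \(U_nf=T_nf-f\) gives \(\widetilde D_n=T_{n-1}\widetilde D_{n-2}+U_n\widetilde D_{n-1}\). The two inputs here are \(h\) and \(h+1\) for some \(h\). What the paper propagates is \emph{compatibility} of these two summands, not an oriented interlacing \(Q_n\preceq P_n\). You give no argument that an oriented pair of the kind you want exists.

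Second, even with the right pair, your closing principle is not enough. An operator \(a+bt+t(1-t)\frac{d}{dt}\) mapping \(g\preceq f\) to \(Tg\preceq Tf\) only relates outputs of the \emph{same} operator. In the decomposition above, the inductive step must show that \(T_n\widetilde D_{n-1}\), \(U_{n+1}T_{n-1}\widetilde D_{n-2}\) and \(U_{n+1}U_n\widetilde D_{n-1}\) are pairwise compatible. These are outputs of different operators on different inputs. The paper needs two dedicated lemmas for this:
\begin{enumerate}
  \item A mixed lemma: if \(g\preceq f\) and \(\deg f=\deg g+1\), then \(T_ng\) and \(U_{n+1}f\) are compatible. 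It is proved by partial fractions of \(g/(f+\lambda g)\) and the residue bound \(\xi_i+\lambda c_i<0\), which comes from evaluating at \(0\).
  \item A crossed-completion lemma: \(T_nh'\) is a common interlacer of \(T_nh\), \(T_n(h+1)\), \(U_{n+1}h\) and \(U_{n+1}(h+1)\).
\end{enumerate}
The orientation in the second lemma comes from the finite symbol of \(L_np=np+(1-x)p'\), using \(h\preceq xh'\), \(L_n(xh')=T_nh'\) and \(xL_nh=U_{n+1}h\). It does not come from the symbol of \(t(1-t)\frac{d}{dt}\) as in your Step~3, which again only relates outputs of a single operator. Chudnovsky--Seymour then upgrades pairwise compatibility of the three pieces to compatibility of the triple. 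Your Wronskian fallback cannot replace this, since it presupposes the still unspecified \(Q_n\).
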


Starting from descent-set formulas of J.~D\'esarm\'enien and D.~Foata and of
I.~M.~Gessel and C.~Reutenauer \cite{DesarmenienFoata1985,
GesselReutenauer1993}, Fu, Lin, and Zeng obtained the recurrence
\cite[Lemma~4.1]{FuLinZeng2018}
\begin{equation}
  D_n(t)
  =
  \bigl(1+(n-1)t\bigr)D_{n-1}(t)
  +t(1-t)D_{n-1}'(t)
  +(-1)^n t^{n-1}.
  \label{eq:fuLinZengRecurrence}
\end{equation}
For \(f\in\mathbb R[x]\), we define the \defin{Eulerian operator}
\[
  \defin{(T_nf)(x)}
  \coloneqq
  \bigl(1+(n-1)x\bigr)f(x)+x(1-x)f'(x).
\]
We define its nonconstant part by
\[
  \defin{(U_nf)(x)}
  \coloneqq
  (n-1)xf(x)+x(1-x)f'(x).
\]
Thus the first two terms on the right of \eqref{eq:fuLinZengRecurrence} are
\((T_nD_{n-1})(t)\).

\subsection{Reciprocal form}
\label{sec:reciprocal}

Direct differentiation gives the intertwining identity
\[
  x^{n-1}(T_nf)(1/x)
  =T_n\bigl(x^{n-2}f(1/x)\bigr)
  \qquad(\deg f\leq n-2).
\]

We define the \defin{reciprocal derangement polynomial}
\begin{equation}
  \defin{Q_n(x)}
  \coloneqq
  x^{n-1}D_n(1/x).
  \label{eq:reciprocalPolynomial}
\end{equation}
Applying the intertwining identity to \eqref{eq:fuLinZengRecurrence} gives
\begin{equation}
  Q_n=T_nQ_{n-1}+(-1)^n.
  \label{eq:reciprocalRecurrence}
\end{equation}
The initial values are
\[
  Q_2=1,
  \qquad
  Q_3=2x.
\]
Equivalently,
\[
  Q_n(x)=\sum_{\pi\in\mathfrak D_n}x^{\asc(\pi)}.
\]
Consequently, \(Q_n\) has nonnegative coefficients, and
\(\deg Q_n=n-2\) for \(n\geq2\).

Reciprocation sends every nonzero root \(\rho\) to \(1/\rho\), while any
additional roots at zero are real.  It is therefore enough to prove that every
\(Q_n\) is real-rooted.

\subsection{Compatibility, interlacing, and finite symbols}
\label{sec:compatibility}

A finite family \(f_1,\dotsc,f_s\) of real polynomials is
\defin{compatible} if
\[
  \lambda_1f_1+\dotsb+\lambda_sf_s
\]
is real-rooted for every \(\lambda_1,\dotsc,\lambda_s\geq0\).
For nonzero real-rooted polynomials \(g,f\) with positive leading
coefficients, we write
\[
  \defin{g\preceq f}
\]
if \(\deg f-\deg g\in\{0,1\}\) and their roots alternate in the following
orientation.  For constant \(g\), this condition is automatic.  Otherwise,
write \(\deg g=d\geq1\), list the roots of \(g\) as
\(u_1\geq\dotsb\geq u_d\) and those of \(f\) as \(v_i\), also in decreasing
order.  We require
\[
  \begin{aligned}
    v_1&\geq u_1\geq v_2\geq\dotsb\geq u_d\geq v_{d+1}
      &&\text{if }\deg f=d+1,\\
    v_1&\geq u_1\geq v_2\geq\dotsb\geq v_d\geq u_d
      &&\text{if }\deg f=d.
  \end{aligned}
\]
We call \(g\) a \defin{common interlacer} of \(f_1,\dotsc,f_s\) if
\(g\preceq f_i\) for every \(i\).
The univariate Hermite--Kakeya--Obreschkoff theorem says that every real
linear combination of two polynomials in this oriented relation is
real-rooted; see \cite[Section~1.2]{BorceaBranden2009}.  The compatibility
theorem of M.~Chudnovsky and P.~Seymour
\cite[Theorem~3.6]{ChudnovskySeymour2007} says that a common interlacer
implies compatibility and that, for polynomials with positive leading
coefficients, pairwise compatibility is equivalent to compatibility of the
whole finite family.

With this orientation, if the roots are simple and the interlacing is strict,
then the residues in the partial-fraction expansion of \(g/f\) at the roots
of \(f\) are positive.  One-sided compatibility alone does not determine
\(g\preceq f\).

\subsubsection{The finite-symbol criterion}
\label{subs:finiteSymbol}

A polynomial in several complex variables is \defin{stable} if it does not
vanish when all variables lie in the open upper half-plane.  We let \(L\) be a
real-linear operator on polynomials of degree at most \(d\), with input
variable \(x\).  For symbol variables \(y,z\), its \defin{finite algebraic
symbol} is
\[
  \defin{\Sigma_d(L)(y,z)}
  \coloneqq
  \sum_{k=0}^d\binom{d}{k}L(x^k)(y)z^{d-k}.
\]
We use the following univariate consequence of the finite-symbol theorem of
J.~Borcea and P.~Br\"and\'en \cite[Theorem~1.1]{BorceaBranden2009}.

\begin{theorem}
  \label{thm:finiteSymbolCriterion}
  If \(\Sigma_d(L)\) is stable, then \(L\) preserves real-rootedness on
  polynomials of degree at most \(d\).  Moreover, if
  \(g\preceq f\), \(\deg f,\deg g\leq d\), and \(Lf,Lg\) are nonzero with
  positive leading coefficients, then
  \[
    Lg\preceq Lf.
  \]
\end{theorem}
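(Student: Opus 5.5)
The statement to prove is Theorem~\ref{thm:finiteSymbolCriterion}, a univariate consequence of the Borcea--Br\"and\'en finite-symbol theorem.

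\textbf{First assertion.} I will quote \cite[Theorem~1.1]{BorceaBranden2009} directly. In the real-linear, univariate form restricted to polynomials of degree at most \(d\), that theorem says the following. If the operator has rank greater than two, then \(L\) preserves real-rootedness exactly when \(\Sigma_d(L)\) or \(\Sigma_d(L)(y,-z)\) is stable. If the rank is at most two, the operator is a degenerate map of the form \(\alpha(f)P+\beta(f)Q\) with \(P,Q\) interlacing. Since stability of \(\Sigma_d(L)\) is one of the sufficient alternatives, the first assertion follows. In the degenerate case it also follows, because stability of the symbol still forces the image to be real-rooted.

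\textbf{Second assertion.} I will use the Hermite--Biehler/Obreschkoff characterization of oriented interlacing. For real polynomials \(g,f\) with positive leading coefficients and \(\deg f-\deg g\in\{0,1\}\), the relation \(g\preceq f\) holds if and only if \(f+ig\) is stable in one variable, that is, has no zeros in the open upper half-plane. Equivalently, \(f+wg\) is stable in the two variables \((x,w)\). I will fix the sign convention by checking \(g=1\), \(f=x\): the polynomial \(x+i\) has its root \(-i\) in the lower half-plane, so this orientation is the correct one.

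Extending \(L\) complex-linearly gives \(L(f+ig)=Lf+iLg\). The key input is that stability of \(\Sigma_d(L)\) means \(L\) preserves stability of univariate complex polynomials of degree at most \(d\). This is the full content of Borcea--Br\"and\'en's theorem for complex stability preservers: the symbol \(\Sigma_d(L)(y,z)\) is exactly the symbol characterizing preservers of univariate stability, and real coefficients are allowed. Hence \(Lf+iLg\) is stable or identically zero.

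It is not zero, because \(Lf\) is a nonzero real polynomial. Applying the Hermite--Biehler equivalence backwards, and using that \(Lf\) and \(Lg\) both have positive leading coefficients, gives \(Lg\preceq Lf\). The degree condition \(\deg Lf-\deg Lg\in\{0,1\}\) is part of this equivalence.

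\textbf{Main obstacle.} The delicate point is the degree and orientation bookkeeping in the Hermite--Biehler step. Three cases need care:
\begin{itemize}
\item the possibility that \(\deg Lg>\deg Lf\);
\item the case where \(Lg\) is constant;
\item cases where the interlacing is not strict.
\end{itemize}
I will handle all three in the same way. Perturb \(f\) and \(g\) to strictly interlacing pairs with simple roots, apply the argument there, and pass to the limit using Hurwitz's theorem. Positivity of the leading coefficients prevents the orientation from flipping in the limit. The condition \(\deg f-\deg g\in\{0,1\}\) for the images then follows from the stable form \(Lf+iLg\): stability forces \(\deg Lg\le \deg Lf\), and it forces a degree drop of at most one.
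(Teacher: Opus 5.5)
Your proposal is correct and follows essentially the paper's route: you encode \(g\preceq f\) as stability via Hermite--Biehler, push it through \(L\) using the Borcea--Br\"and\'en sufficiency of a stable symbol, and read off \(Lg\preceq Lf\), with the degree condition coming from the positive leading coefficients. The only difference is that you use the univariate \(f+ig\) where the paper uses \(f(y)+zg(y)\), which amounts to fixing \(z=i\); your perturbation and Hurwitz step is harmless but unnecessary, since Hermite--Biehler already covers non-strict interlacing and common zeros.
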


The oriented assertion follows directly from the same theorem.  The
Hermite--Biehler characterization says that \(g\preceq f\) is equivalent to
the stability of \(f(y)+zg(y)\).  Applying \(L\) in the variable \(y\) gives
the stable polynomial \((Lf)(y)+z(Lg)(y)\), and Hermite--Biehler then gives
\(Lg\preceq Lf\).

\subsection{Rolle factorizations for the Eulerian operators}
\label{sec:rolleFactorizations}

The zero-preserving properties of the Eulerian operators follow from the
factorizations
\begin{equation}
  T_nf
  =(1-x)^{n+1}
  \left(\frac{xf}{(1-x)^n}\right)'
  \label{eq:TnRolleFactorization}
\end{equation}
and
\begin{equation}
  U_nf
  =x(1-x)^n
  \left(\frac{f}{(1-x)^{n-1}}\right)'.
  \label{eq:UnRolleFactorization}
\end{equation}
On monomials they act by
\begin{align}
  T_n(x^k)
  &=(k+1)x^k+(n-1-k)x^{k+1},
  \label{eq:TnMonomial}\\
  U_n(x^k)
  &=kx^k+(n-1-k)x^{k+1}.
  \label{eq:UnMonomial}
\end{align}
In particular, both operators preserve nonnegative coefficients under the
degree bounds below.

\begin{lemma}
  \label{lem:eulerianOperatorPreservation}
  Let \(f\) have nonnegative coefficients and only nonpositive real zeros.
  Then the following statements hold.
  \begin{enumerate}
    \item If \(\deg f\leq n-2\), then \(T_nf\) has only nonpositive real
      zeros.
    \item If \(\deg f\leq n-2\), then \(U_nf\) has only nonpositive real
      zeros and
      \begin{equation}
        f\preceq U_nf.
        \label{eq:UnInterlacing}
      \end{equation}
    \item If \(f_1,\dotsc,f_s\) are compatible, have nonnegative
      coefficients, and have degree at most \(n-2\), then both families
      \(T_nf_1,\dotsc,T_nf_s\) and \(U_nf_1,\dotsc,U_nf_s\) are compatible.
    \item If \(g\preceq f\), both polynomials satisfy the hypotheses above,
      and \(\deg f,\deg g\leq n-2\), then
      \begin{equation}
        T_ng\preceq T_nf.
        \label{eq:TnOrientedPreservation}
      \end{equation}
  \end{enumerate}
\end{lemma}

\begin{proof}
  Suppose first that the zeros of \(f\) are simple and strictly negative.
  Apply Rolle's theorem to the two rational functions in
  \eqref{eq:TnRolleFactorization} and \eqref{eq:UnRolleFactorization}.  Their
  limits at \(-\infty\), together with the zero at the origin in the second
  factorization, give the first two assertions and
  \eqref{eq:UnInterlacing}.  The constant case is immediate.  Multiple zeros
  and zeros at the origin follow by a root perturbation and a
  coefficientwise limit; \eqref{eq:TnMonomial} and \eqref{eq:UnMonomial}
  keep the relevant degrees fixed.

  If \(f_1,\dotsc,f_s\) are compatible and \(\lambda_i\geq0\), linearity
  allows us to apply the first two parts to
  \(\sum_i\lambda_if_i\).  This proves componentwise preservation for both
  operators.

  It remains to prove the oriented assertion.  We put \(d=n-2\).  The case
  \(d=0\) is immediate, so suppose that \(d\geq1\).  Equation
  \eqref{eq:TnMonomial} gives
  \[
    \Sigma_d(T_n)(y,z)
    =(y+z)^{d-1}
    \bigl(y^2+(d+1)yz+(d+1)y+z\bigr).
  \]
  This symbol is stable.  Indeed, \(y+z\) is stable, and a zero of the second
  factor would satisfy
  \begin{equation}
    z=-\frac{y(y+d+1)}{(d+1)y+1}.
    \label{eq:TnSymbolRoot}
  \end{equation}
  Writing \(y=\sigma+i\tau\), where \(\tau>0\), we find
  \[
    \begin{aligned}
      \operatorname{Im}\frac{y(y+d+1)}{(d+1)y+1}
      &=\frac{\tau}{|(d+1)y+1|^2}\\[-2pt]
      &\quad{}\times\left(
          (d+1)\left(\left(\sigma+\frac1{d+1}\right)^2+\tau^2\right)
          {}+\frac{(d+1)^2-1}{d+1}
        \right)>0.
    \end{aligned}
  \]
  Thus \eqref{eq:TnSymbolRoot} forces \(\operatorname{Im}z<0\).  Theorem
  \ref{thm:finiteSymbolCriterion} proves
  \eqref{eq:TnOrientedPreservation}; equation~\eqref{eq:TnMonomial} also
  guarantees nonzero outputs with positive leading coefficients.  This
  completes the proof.
\end{proof}

\subsection{The mixed \texorpdfstring{\(T/U\)}{T/U} lemma}
\label{sec:mixedLemma}

We require the following mixed compatibility statement.

\begin{lemma}
  \label{lem:mixedRolle}
  Let \(f,g\) have nonnegative coefficients and only nonpositive real zeros.
  Suppose that
  \[
    g\preceq f,
    \qquad
    \deg f=\deg g+1\leq n-1.
  \]
  Then \(T_ng\) and \(U_{n+1}f\) are compatible.
\end{lemma}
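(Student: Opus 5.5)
The plan is to apply the Chudnovsky--Seymour theorem to the pair \(T_ng,\ U_{n+1}f\): if they have a common interlacer, they are compatible. First I will fix degrees. Put \(d=\deg g\), so that \(\deg f=d+1\leq n-1\) and \(d\leq n-2\). By \eqref{eq:TnMonomial}, the top coefficient of \(T_ng\) is \((n-1-d)\) times that of \(g\). It is nonzero when \(d\leq n-2\), so \(\deg T_ng=d+1\). Likewise \(U_{n+1}(x^{d+1})\) has top coefficient \(n-d-1>0\), so \(\deg U_{n+1}f=d+2\). Both outputs have positive leading coefficients.

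The natural candidate for the common interlacer is \(f\) itself. Lemma~\ref{lem:eulerianOperatorPreservation}(2), applied with \(n+1\) in place of \(n\) (allowed since \(\deg f\leq n-1\)), gives \(f\preceq U_{n+1}f\) directly. So everything reduces to proving the equal-degree relation
\[
  f\preceq T_ng .
\]
To attack this I will use the identity \(xT_n(x^k)=U_{n+1}(x^{k+1})\), read off from \eqref{eq:TnMonomial} and \eqref{eq:UnMonomial}. It gives \(xT_ng=U_{n+1}(xg)\), so by \eqref{eq:UnRolleFactorization} \(T_ng\) is a positive multiple of \((1-x)^n\bigl(xg/(1-x)^n\bigr)'\) on \(x<1\).

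I would then count signs of \(T_ng\) at the roots \(v_1\geq\dotsb\geq v_{d+1}\) of \(f\). By Rolle, \(T_ng\) has exactly one root in each gap between consecutive roots of \(xg\), namely \(0\geq u_1\geq\dotsb\geq u_d\), and one root to the left of \(u_d\). The hypothesis \(g\preceq f\) places exactly one \(v_i\) in each gap. So the proof must decide on which side of the Rolle root of \(T_ng\) each \(v_i\) lies.

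For that I would write \(\bigl(xg/(1-x)^n\bigr)'\) as \(xg/(1-x)^n\) times the logarithmic derivative
\[
  \frac1x+\sum_j\frac1{x-u_j}+\frac{n}{1-x},
\]
and compare it with the analogous expression for \(f\). Here the positivity of the residues of \(g/f\) should force the correct sign of \(T_ng\) at each \(v_i\). This comparison is the main obstacle; the leftmost interval, involving \(v_{d+1}\) and the extra root of \(T_ng\) below \(u_d\), is the delicate case.

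If the direct comparison fails, my fallback is to verify compatibility directly. That means showing \(\lambda T_ng+\mu U_{n+1}f\) is real-rooted for all \(\lambda,\mu\geq0\). I would rewrite this combination as \(\lambda T_ng+\mu U_{n+1}f=\tfrac1x\,U_{n+1}(\lambda xg+\mu xf)\). Then the combination is a rescaling of \(U_{n+1}\) applied to \(x(\lambda g+\mu f)\). The polynomial \(\lambda g+\mu f\) has nonnegative coefficients and is real-rooted by Hermite--Kakeya--Obreschkoff, since \(g\preceq f\). Its degree is at most \(n-1\). The factor \(x\) raises the degree to \(n\), one beyond the range covered by Lemma~\ref{lem:eulerianOperatorPreservation} for \(U_{n+1}\). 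Hence the boundary-degree case of the Rolle argument in \eqref{eq:UnRolleFactorization} must be checked by hand. The top coefficient of \(U_{n+1}(x^n)\) vanishes, so the degree drops and no root escapes to infinity. The Rolle count then still gives only real nonpositive zeros. Since \(U_{n+1}(x\cdot)\) has the factor \(x\), dividing by \(x\) gives real-rootedness of every nonnegative combination, i.e.\ compatibility.
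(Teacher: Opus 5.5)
Neither of your two routes proves the lemma.

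\textbf{The main route.} It needs \(f\preceq T_ng\), and this is false in general. Take \(g=1\) and \(f=x+c\) with \(0\le c<1/(n-1)\). The hypotheses hold, since a constant \(g\) satisfies \(g\preceq f\) automatically, and \(c=0\) is exactly the paper's boundary case \(f=\alpha x\), \(g=\beta\). Here \(T_ng=1+(n-1)x\) vanishes at \(-1/(n-1)<-c\). So the larger zero belongs to \(f\), and the orientation is \(T_ng\preceq f\), not \(f\preceq T_ng\); for \(c>1/(n-1)\) it flips. This cannot be repaired by a sharper residue comparison. The hypothesis \(g\preceq f\) lets the top zero of \(f\) move freely within its gap below \(0\), while the Rolle zero of \(T_ng\) in that gap depends only on \(g\) and \(n\). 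So \(f\) is not a common interlacer.

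\textbf{The fallback.} It rests on a wrong identity. From \(xT_n(x^k)=U_{n+1}(x^{k+1})\) you get \(\tfrac1xU_{n+1}(xf)=T_nf\), not \(U_{n+1}f\); in fact \(T_nf-U_{n+1}f=(1-x)f\). Hence \(\tfrac1xU_{n+1}\bigl(x(\lambda g+\mu f)\bigr)=\lambda T_ng+\mu T_nf\). Your argument therefore shows that \(T_ng\) and \(T_nf\) are compatible, which is a different and easier statement. The actual pencil differs from it by \(-\mu(1-x)f\), and controlling such a correction term is the whole content of the lemma.

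\textbf{How the paper handles it.} For \(\lambda>0\) it writes \(U_{n+1}f+\lambda T_ng=U_{n+1}h+\lambda(1-x)g\) with \(h=f+\lambda g\), so that \(g\preceq h\). At a zero \(\xi_i\) of \(h\), the pencil equals \((1-\xi_i)h'(\xi_i)(\xi_i+\lambda c_i)\), where \(c_i>0\) is the residue of \(g/h\) at \(\xi_i\). The identity \(\sum_i\lambda c_i/(-\xi_i)=\lambda g(0)/h(0)\le1\) forces \(\xi_i+\lambda c_i<0\). Sign alternation then gives \(\deg f+1\) real zeros. The case \(f=\alpha x\), \(g=\beta\) and non-strict interlacing are treated separately.

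\textbf{A repair closer to your plan.} Your identity \(xT_ng=U_{n+1}(xg)\) does give a common-interlacer proof if the interlacer is \(T_ng\) rather than \(f\).
\begin{enumerate}
  \item Since all zeros are nonpositive, \(g\preceq f\) gives \(f\preceq xg\).
  \item The operator \(U_{n+1}=xL_n\) has finite symbol \(y(y+z)^{D-1}\bigl((n-D)y+nz+D\bigr)\), which is stable for \(D=\deg f\le n-1\).
  \item Theorem~\ref{thm:finiteSymbolCriterion} then gives \(U_{n+1}f\preceq U_{n+1}(xg)=xT_ng\), much as in the proof of Lemma~\ref{lem:crossedCompletion}.
  \item Both polynomials vanish at the origin. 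Deleting that zero from the upper member \(xT_ng\) turns this equal-degree relation into \(T_ng\preceq U_{n+1}f\).
  \item Hermite--Kakeya--Obreschkoff then gives compatibility.
\end{enumerate}
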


\begin{proof}
  Fix \(\lambda>0\).  We prove that
  \[
    \defin{\mathcal P_\lambda}
    \coloneqq
    U_{n+1}f+\lambda T_ng
  \]
  is real-rooted.  Scaling then gives every combination with two positive
  coefficients, while Lemma~\ref{lem:eulerianOperatorPreservation} gives the
  endpoint combinations.

  Put \(\defin{h}\coloneqq f+\lambda g\) and
  \(d=\deg f=\deg h\); then \(\deg g=d-1\).
  Since \(g\preceq f\), the polynomial \(h\) is real-rooted and
  \(g\preceq h\).  The identity \(T_ng-U_{n+1}g=(1-x)g\) rewrites the pencil as
  \[
    \mathcal P_\lambda=U_{n+1}h+\lambda(1-x)g.
  \]

  We first assume that the zeros
  \[
    \xi_1>\xi_2>\dotsb>\xi_d
  \]
  of \(h\) are simple and strictly negative and that \(g\preceq h\) is strict.
  We then have
  \begin{equation}
    \frac{g(x)}{h(x)}
    =\sum_{i=1}^d\frac{c_i}{x-\xi_i},
    \qquad
    c_i>0.
    \label{eq:mixedPartialFractions}
  \end{equation}
  There is no polynomial part because \(\deg g=d-1\).

  At \(x=0\), equation~\eqref{eq:mixedPartialFractions} gives
  \[
    \sum_{i=1}^d\frac{\lambda c_i}{-\xi_i}
    =\frac{\lambda g(0)}{f(0)+\lambda g(0)}
    \leq1.
  \]
  The denominator is positive because all roots of \(h\) are strictly
  negative.  If \(f(0)>0\), the inequality is strict.  If \(f(0)=0\) and
  \(d\geq2\), the sum equals \(1\), but it contains at least two positive
  terms.  In either case, each summand is strictly smaller than \(1\), and
  therefore
  \begin{equation}
    \xi_i+\lambda c_i<0.
    \label{eq:mixedShiftedResidueNegative}
  \end{equation}

  It remains to treat the boundary \(d=1\) and \(f(0)=0\).  We then have
  \(f(x)=\alpha x\) and \(g(x)=\beta\) for some \(\alpha,\beta>0\), and
  \[
    \mathcal P_\lambda(x)
    =(\alpha x+\lambda\beta)\bigl(1+(n-1)x\bigr).
  \]
  Thus the boundary pencil is real-rooted with nonpositive zeros.

  Outside this boundary case, evaluation at a zero \(\xi_i\) of \(h\) gives
  \begin{align}
    \mathcal P_\lambda(\xi_i)
    &=\xi_i(1-\xi_i)h'(\xi_i)
      +\lambda(1-\xi_i)g(\xi_i)\notag\\
    &=(1-\xi_i)h'(\xi_i)
      \left(
        \xi_i+\lambda\frac{g(\xi_i)}{h'(\xi_i)}
      \right)\notag\\
    &=(1-\xi_i)h'(\xi_i)(\xi_i+\lambda c_i).
    \label{eq:mixedSignEvaluation}
  \end{align}
  By \eqref{eq:mixedShiftedResidueNegative}, these values alternate with the
  opposite signs from \(h'(\xi_i)\).  At the right endpoint,
  \begin{equation}
    \mathcal P_\lambda(0)=\lambda g(0)>0.
    \label{eq:mixedValueAtZero}
  \end{equation}
  The degree of \(\mathcal P_\lambda\) is \(d+1\), and its leading
  coefficient is
  \(n-d\) times that of \(f\), hence positive.  Its sign at \(-\infty\),
  together with
  \eqref{eq:mixedSignEvaluation}--\eqref{eq:mixedValueAtZero}, gives one zero
  in each interval
  \[
    (\xi_1,0),\quad
    (\xi_2,\xi_1),\quad\dotsc,\quad
    (\xi_d,\xi_{d-1}),\quad
    (-\infty,\xi_d).
  \]
  These are \(d+1\) distinct negative real zeros, and hence all zeros of
  \(\mathcal P_\lambda\) are real.

  The non-strict cases follow by simultaneously perturbing the two
  interlacing root lists to a strictly interlacing pair with simple negative
  zeros.  The degrees remain fixed, so coefficientwise closure of
  real-rootedness gives the result on taking the limit.  This completes the
  proof.
\end{proof}

\subsection{The crossed-completion lemma}
\label{sec:crossedCompletion}

We handle the alternating constant in \eqref{eq:reciprocalRecurrence} by
retaining both possible completions \(h\) and \(h+1\).  The following lemma
requires an oriented common interlacer, not merely pairwise compatibility.

\begin{lemma}
  \label{lem:crossedCompletion}
  Suppose that \(h\) and \(h+1\) have nonnegative coefficients and only
  nonpositive real zeros, and suppose that \(\deg h\leq n-2\).  Then
  \begin{equation}
    T_nh,
    \qquad
    T_n(h+1),
    \qquad
    U_{n+1}h,
    \qquad
    U_{n+1}(h+1)
    \label{eq:crossedCompletionOutputs}
  \end{equation}
  have a common interlacer.  In particular, every pair among them is
  compatible.
\end{lemma}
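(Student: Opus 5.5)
The plan is to exhibit an explicit common interlacer and verify \(g\preceq F\) for each of the four outputs \(F\) in \eqref{eq:crossedCompletionOutputs} by sign counting, exactly as in the proof of Lemma~\ref{lem:mixedRolle}. The Chudnovsky--Seymour theorem then gives pairwise compatibility. First I would record the orientation between the two completions. Since \((h+1)-h=1\) and both are real-rooted of the same degree \(d\) with positive leading coefficient, their roots alternate. The roots of \(h+1\) are the points where \(h=-1\), and these lie to the left of the corresponding roots of \(h\). This gives
\[
  h+1\preceq h .
\]
My candidate common interlacer is \(g=h+1\). The case \(d=0\) (constant \(h\)) is trivial, and I will reduce to simple strictly negative roots by the same perturbation-and-limit argument used in Lemma~\ref{lem:eulerianOperatorPreservation}.

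Two of the four relations follow from earlier results. Lemma~\ref{lem:eulerianOperatorPreservation}(2), applied with \(n+1\) in place of \(n\), gives \(h+1\preceq U_{n+1}(h+1)\). Lemma~\ref{lem:eulerianOperatorPreservation}(4) applied to \(h+1\preceq h\) gives \(T_n(h+1)\preceq T_nh\). For \(h+1\preceq T_n(h+1)\), I evaluate at a root \(\eta\) of \(h+1\):
\[
  T_n(h+1)(\eta)=\eta(1-\eta)h'(\eta).
\]
This has the sign opposite to \(h'(\eta)\). Combined with \(T_n(h+1)(0)=h(0)+1>0\) and the degree and leading-coefficient count from \eqref{eq:TnMonomial}, this places one root of \(T_n(h+1)\) in each gap, exactly as in the proof of Lemma~\ref{lem:mixedRolle}.

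The remaining two outputs, \(T_nh\) and \(U_{n+1}h\), are where the constant enters, and this is the main obstacle. At a root \(\eta\) of \(h+1\) we have \(h(\eta)=-1\), so
\[
  U_{n+1}h(\eta)=\eta\bigl((1-\eta)h'(\eta)-n\bigr),
  \qquad
  T_nh(\eta)=-\bigl(1+(n-1)\eta\bigr)+\eta(1-\eta)h'(\eta).
\]
I need these to have the same sign as \(-h'(\eta)\) for every \(\eta\). My first attempt is to use the identity \(T_n-U_{n+1}=(1-x)\) to write
\[
  T_nh=U_{n+1}h+(1-x)h,
\]
and then expand \(h/(h+1)=1-1/(h+1)\) in partial fractions over the roots \(\eta_i\). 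The residues \(-1/h'(\eta_i)\) have controlled signs. The required inequality becomes a shifted-residue bound of the form \(\eta_i+c_i<0\), analogous to \eqref{eq:mixedShiftedResidueNegative}. I expect to obtain it from an evaluation at \(x=0\), where \(h(0)\geq0\), in the same way as in the proof of Lemma~\ref{lem:mixedRolle}.

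If this shifted-residue bound fails for \(g=h+1\), I would switch candidates. One option is a convex combination of \(h\) and \(h+1\). Another is to apply Theorem~\ref{thm:finiteSymbolCriterion} directly to the operator \(f\mapsto(U_{n+1}f,\,T_nf)\) paired with the oriented relation \(h+1\preceq h\), and then read off a common interlacer from the resulting stable two-variable polynomial.
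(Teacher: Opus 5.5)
\textbf{Genuine gap: $g=h+1$ is not a common interlacer.} The orientation you start from is also false. For $\deg h\geq2$ the relation $h+1\preceq h$ fails. The roots of $h+1$ solve $h=-1$, so they come in pairs inside the intervals between consecutive roots of $h$ on which $h<0$. The two root lists therefore nest rather than alternate: for $h=x^2+3x$, both roots $(-3\pm\sqrt5)/2$ of $h+1$ lie in $(-3,0)$. In fact $h$ and $h+1$ cannot interlace in either orientation. By Hermite--Kakeya--Obreschkoff, interlacing would make every $h+c=(1-c)h+c(h+1)$ real-rooted, which fails for large $c$ when $\deg h\geq2$.

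So your use of part (4) of Lemma~\ref{lem:eulerianOperatorPreservation}, and your finite-symbol fallback, both rest on a false premise. Even if the premise held, $T_n(h+1)\preceq T_nh$ is not one of the four relations $g\preceq F$ you need, and $\preceq$ is not transitive.

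More decisively, the shifted-residue bound you hope to get from evaluation at $x=0$ is false, because $h+1\not\preceq U_{n+1}h$ in general. Take $h=x$ and $n=3$, which satisfy all hypotheses. Then $U_4h=x(1+2x)$ has both roots $0$ and $-\tfrac12$ to the right of the root $-1$ of $h+1$. Your own formula gives $U_4h(-1)=(-1)\bigl(2h'(-1)-3\bigr)=1$, which has the sign of $h'(-1)$, not the opposite sign. Similarly, for $h=x^2+3x$ and $n=4$, $U_5h=x(2x^2+11x+3)$ has a root near $-0.288$, to the right of the larger root $\approx-0.382$ of $h+1$.

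A convex combination $h+c$ with a fixed $c$ does not rescue this. Already for $h=x$, the output $U_{n+1}h=x\bigl(1+(n-1)x\bigr)$ forces $c\leq 1/(n-1)$. Your proposal gives no way to choose or verify an $n$-dependent $c$.

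\textbf{Missing idea: build the interlacer from $h'$.} Both completions share $h'$, because $(h+1)'=h'$, and the paper takes $T_nh'$ as the common interlacer. Rolle gives $h'\preceq h$ and $h'\preceq h+1$. The oriented part of Lemma~\ref{lem:eulerianOperatorPreservation} then gives $T_nh'\preceq T_nh$ and $T_nh'\preceq T_n(h+1)$.

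For the $U$-outputs, the paper introduces the lowering operator $L_np=np+(1-x)p'$. Its finite symbol $(y+z)^{d-1}\bigl((n-d)y+nz+d\bigr)$ is stable for $d\leq n-2$, so $L_n$ preserves orientation. From $h\preceq xh'$ one gets $L_nh\preceq L_n(xh')=T_nh'$. Adjoining the zero root to $L_nh$ then gives $T_nh'\preceq xL_nh=U_{n+1}h$, and the same argument applies to $h+1$. The cases $\deg h\leq1$ are handled by direct evaluation at the root $-1/(n-1)$ of $T_nh'$.

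In your two examples, $T_3h'=1+2x$ and $T_4h'=(4x+1)(x+3)$ do interlace all four outputs.
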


\begin{proof}
  We first treat \(d=\deg h\geq2\).  Since \((h+1)'=h'\), Rolle's theorem
  and the oriented part of Lemma~\ref{lem:eulerianOperatorPreservation} give
  \[
    h'\preceq h,
    \qquad
    h'\preceq h+1,
    \qquad
    T_nh'\preceq T_nh,
    \qquad
    T_nh'\preceq T_n(h+1).
  \]
  We prove the same orientation for the two \(U\)-outputs; the
  compatibility in Lemma~\ref{lem:mixedRolle} does not determine it.

  We define the \defin{lowering operator}
  \[
    \defin{L_np}\coloneqq np+(1-x)p'.
  \]
  On polynomials of degree at most \(d\), its finite algebraic symbol is
  \[
    \Sigma_d(L_n)(y,z)
    =(y+z)^{d-1}\bigl((n-d)y+nz+d\bigr).
  \]
  Since \(d\leq n-2\), both factors are stable.  Thus
  Theorem~\ref{thm:finiteSymbolCriterion} shows that \(L_n\) preserves
  oriented interlacing on polynomials of degree at most \(d\).

  Since all zeros of \(h\) are nonpositive, Rolle's theorem, followed by
  adjoining the zero root of \(xh'\), gives \(h\preceq xh'\).  The monomial
  action of \(L_n\) preserves nonnegative coefficients and positive leading
  coefficients in this degree range.  Hence
  \[
    L_nh\preceq L_n(xh').
  \]

  Adjoining a zero root to the lower member of a same-degree oriented pair
  reverses the placement.
  Using \(L_n(xh')=T_nh'\) and \(xL_nh=U_{n+1}h\), we obtain
  \[
    T_nh'\preceq U_{n+1}h.
  \]
  Applying the same argument to \(h+1\), which has the same derivative and
  degree, gives \(T_nh'\preceq U_{n+1}(h+1)\).  Hence \(T_nh'\) is a common
  interlacer of all four polynomials in \eqref{eq:crossedCompletionOutputs}.

  We finish with the low-degree cases.  If \(h\) is constant, a nonzero
  constant is a common interlacer of all the linear outputs; zero outputs are
  harmless.  If \(h(x)=c_0+c_1x\) has degree one, then \(c_1>0\),
  \(n\geq3\), and \(T_nh'=c_1(1+(n-1)x)\) has root
  \(s=-1/(n-1)\).  Direct evaluation gives
  \[
    \begin{aligned}
      T_nh(s)=T_n(h+1)(s)&=c_1s(1-s)<0,\\
      U_{n+1}h(s)&=nc_0s\leq0,
      &U_{n+1}(h+1)(s)&=n(c_0+1)s<0.
    \end{aligned}
  \]
  All four quadratic outputs have positive leading coefficient, so
  these evaluations place \(s\) between their two roots.  Thus \(T_nh'\) is
  a common interlacer, completing the proof in every degree.
\end{proof}

\subsection{A larger deletion-stable family}
\label{sec:largerFamily}

We now place the reciprocal derangement polynomials in a larger family that
is preserved by the deletion recurrence.
We fix an integer \(a\geq1\), set \(\defin{Q_0^{(a)}}\coloneqq1\), and, for
\(r\geq1\), define
\begin{equation}
  \defin{Q_r^{(a)}(x)}
  \coloneqq T_{a+r+1}Q_{r-1}^{(a)}+(-1)^r.
  \label{eq:largerRecurrence}
\end{equation}
The first values are
\begin{equation}
  Q_0^{(a)}=1,
  \qquad
  Q_1^{(a)}=(a+1)x,
  \qquad
  Q_2^{(a)}=(1+(a+1)x)^2.
  \label{eq:largerInitialValues}
\end{equation}
Since \(T_nf=f+U_nf\), recurrence \eqref{eq:largerRecurrence} one step
earlier gives the two-summand decomposition
\begin{equation}
  Q_r^{(a)}
  =T_{a+r}Q_{r-2}^{(a)}+U_{a+r+1}Q_{r-1}^{(a)}
  \qquad(r\geq2).
  \label{eq:largerDecomposition}
\end{equation}

Starting from \eqref{eq:largerInitialValues}, the monomial formulas
\eqref{eq:TnMonomial}--\eqref{eq:UnMonomial} and
\eqref{eq:largerDecomposition} give, for every \(r\geq2\),
\[
  \deg Q_r^{(a)}=r,
  \qquad
  \deg\bigl(T_{a+r}Q_{r-2}^{(a)}\bigr)=r-1,
  \qquad
  \deg\bigl(U_{a+r+1}Q_{r-1}^{(a)}\bigr)=r.
\]
All these polynomials have nonnegative coefficients and positive leading
coefficients.

\begin{theorem}
  \label{thm:largerFamily}
  For every \(a\geq1\) and every \(r\geq2\), the pair
  \[
    \left(
      T_{a+r}Q_{r-2}^{(a)},
      U_{a+r+1}Q_{r-1}^{(a)}
    \right)
  \]
  is compatible.  Consequently, every \(Q_r^{(a)}\) has only nonpositive
  real zeros.
\end{theorem}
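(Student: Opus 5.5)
The plan is an induction on \(r\) that tracks both completions of the alternating recurrence. With \(n=a+r\), the relevant identity is
\[
  Q_{r-1}^{(a)}=T_{a+r}Q_{r-2}^{(a)}+(-1)^{r-1}.
\]
Put \(h\coloneqq Q_{r-2}^{(a)}\). The pair in the theorem is then
\[
  \bigl(T_nh,\ U_{n+1}(T_nh\pm1)\bigr),
\]
with the sign fixed by the parity of \(r\).

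I would carry the following induction hypothesis \(\mathcal H_r\). The polynomial \(Q_{r-2}^{(a)}\) and its two constant completions \(T_nQ_{r-2}^{(a)}\) and \(T_nQ_{r-2}^{(a)}+(-1)^{r-1}\) are real-rooted. Moreover, the ``other'' completion, obtained by adding or subtracting \(1\) so that the pair is of the form \(\{p,p+1\}\), also has nonnegative coefficients and nonpositive zeros. The point of keeping both \(p\) and \(p+1\) is exactly the setting of Lemma~\ref{lem:crossedCompletion}.

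The key step runs as follows. Given \(h=Q_{r-2}^{(a)}\) and \(h\pm1\), both with nonnegative coefficients and nonpositive zeros, Lemma~\ref{lem:crossedCompletion} at index \(n=a+r\) gives a common interlacer \(T_nh'\) for
\[
  T_nh,\qquad T_n(h+1),\qquad U_{n+1}h,\qquad U_{n+1}(h+1).
\]
The degree bound \(\deg h=r-2\leq n-2\) holds since \(a\geq0\). By Chudnovsky--Seymour, any nonnegative combination of these four polynomials is real-rooted.

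Next I would rewrite \(U_{a+r+1}Q_{r-1}^{(a)}\) in terms of the four outputs. The required term is \(U_{n+1}(T_nh+\varepsilon)\), which equals \(U_{n+1}T_nh+\varepsilon U_{n+1}1\). This is not directly among the four outputs, so I would reorganize. Expanding \(T_nh=h+U_nh\), I would look for an identity expressing \(Q_r^{(a)}\) as a positive combination of polynomials of the form \(T\cdot\) and \(U\cdot\) applied to the pair \(\{Q_{r-1}^{(a)},\,Q_{r-1}^{(a)}\mp1\}\). In fact the recursion one level up is cleaner: apply Lemma~\ref{lem:crossedCompletion} with \(h\) equal to whichever of \(Q_{r-2}^{(a)}\) and \(Q_{r-2}^{(a)}\pm1\) is smaller, and index \(n=a+r\). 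Then \(T_{a+r}Q_{r-2}^{(a)}\) is one of \(T_nh\) or \(T_n(h+1)\). Also \(Q_{r-1}^{(a)}\) differs from \(T_{a+r}Q_{r-2}^{(a)}\) by \(\pm1\), and \(U_{n+1}\) kills constants up to the linear term \(nx\). So I would instead show that \(Q_{r-1}^{(a)}\) and \(Q_{r-1}^{(a)}\mp1\) are themselves a crossed pair, and feed them into Lemma~\ref{lem:crossedCompletion} at the next step. That yields a common interlacer of \(T_{a+r+1}Q_{r-1}^{(a)}\), \(U_{a+r+2}Q_{r-1}^{(a)}\) and their completions. Compatibility of \(T_{a+r}Q_{r-2}^{(a)}\) with \(U_{a+r+1}Q_{r-1}^{(a)}\) then comes from the lemma at step \(r-1\), applied to the pair \(h,h+1\) built from \(Q_{r-2}^{(a)}\). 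For this I need \(U_{a+r+1}Q_{r-1}^{(a)}\) to be a nonnegative combination of \(U_{n+1}T_n h\)-type terms lying among, or interlaced by, \(T_nh'\). Here I would check directly that \(T_nh'\preceq U_{n+1}T_nh\), using the lowering operator \(L\) and the finite-symbol criterion Theorem~\ref{thm:finiteSymbolCriterion} as in the proof of Lemma~\ref{lem:crossedCompletion}.

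The main obstacle is the bookkeeping of this orientation. I need to confirm that a single polynomial, presumably \(T_{a+r}Q_{r-2}^{(a)\prime}\) or \(Q_{r-1}^{(a)\prime}\), interlaces both summands in the decomposition~\eqref{eq:largerDecomposition}. I also need to confirm that the completion \(Q_{r-1}^{(a)}\mp1\) keeps nonnegative coefficients and nonpositive zeros. The latter should follow because it is also a positive sum of compatible terms, namely \(T_{a+r}Q_{r-2}^{(a)}\) and its \(\pm1\) shift covered by the lemma. Nonnegativity of its coefficients comes from \eqref{eq:TnMonomial}, since \(T_n\) has constant term \(h(0)\geq1\) when a \(-1\) is subtracted.

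The base cases \(r=2,3\) I would check directly from \eqref{eq:largerInitialValues}. Once compatibility holds, \(Q_r^{(a)}\) is the sum of the two summands in \eqref{eq:largerDecomposition}, so it is real-rooted. Its zeros are nonpositive because all its coefficients are nonnegative.
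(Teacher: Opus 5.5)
Your plan has a genuine gap at its only nontrivial step. You never establish compatibility of $T_{a+r}Q_{r-2}^{(a)}$ with $U_{a+r+1}Q_{r-1}^{(a)}$, and the mechanism you propose cannot establish it. These two polynomials have degrees $r-1$ and $r$, so any common interlacer in the sense of $\preceq$ must have degree exactly $r-1$. Both of your candidates, $T_{a+r}\bigl(Q_{r-2}^{(a)}\bigr)'$ and $\bigl(Q_{r-1}^{(a)}\bigr)'$, have degree $r-2$. For the same reason, the relation $T_nh'\preceq U_{n+1}T_nh$ that you plan to verify with the lowering operator is false for every nonconstant $h$, since $\deg T_nh'=\deg h$ while $\deg U_{n+1}T_nh=\deg h+2$. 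When $(-1)^{r-1}=-1$, you also have $U_{n+1}Q_{r-1}^{(a)}=U_{n+1}T_nh-nx$, which is not a nonnegative combination, so the cone property is unavailable. More structurally, Lemma~\ref{lem:crossedCompletion} only controls one application of $T_n$ or $U_{n+1}$ to $h$ and $h+1$. It never reaches the doubly iterated term $U_{a+r+2}U_{a+r+1}Q_{r-1}^{(a)}$ that appears when $U_{a+r+2}Q_r^{(a)}$ is expanded by \eqref{eq:largerDecomposition}. Your induction hypothesis records only real-rootedness of the completions, so it gives no handle on that term.

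The missing ingredients are to carry compatibility of the pair itself as the induction hypothesis, and to use Lemma~\ref{lem:mixedRolle}. For the step from $r$ to $r+1$, show that (i) $T_{a+r+1}Q_{r-1}^{(a)}$, (ii) $U_{a+r+2}T_{a+r}Q_{r-2}^{(a)}$, and (iii) $U_{a+r+2}U_{a+r+1}Q_{r-1}^{(a)}$ are pairwise compatible. You observed that $Q_{r-1}^{(a)}$ and $T_{a+r}Q_{r-2}^{(a)}$ form a crossed pair to be fed into Lemma~\ref{lem:crossedCompletion} with parameter $a+r+1$; that is exactly the paper's argument for (i) and (ii). The pair (ii), (iii) is the compatible pair at index $r$ pushed through $U_{a+r+2}$, by part~(3) of Lemma~\ref{lem:eulerianOperatorPreservation}. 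The pair (i), (iii) follows from Lemma~\ref{lem:mixedRolle} with $g=Q_{r-1}^{(a)}$ and $f=U_{a+r+1}Q_{r-1}^{(a)}$; the needed relation $g\preceq f$ is part~(2) of Lemma~\ref{lem:eulerianOperatorPreservation}. Chudnovsky--Seymour then makes the triple compatible. Since the sum of (ii) and (iii) is $U_{a+r+2}Q_r^{(a)}$, the pair at index $r+1$ is compatible.
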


\begin{proof}
  We proceed by induction on \(r\).  At \(r=2\),
  \[
    T_{a+2}Q_0^{(a)}=1+(a+1)x,
    \qquad
    U_{a+3}Q_1^{(a)}=(a+1)x\bigl(1+(a+1)x\bigr).
  \]
  The second is \((a+1)x\) times the first, so the two polynomials interlace
  and are compatible.  Their sum is \(Q_2^{(a)}=(1+(a+1)x)^2\).

  Suppose the assertion has been proved through index \(r\).  The initial
  values and \eqref{eq:largerDecomposition} show that all preceding
  \(Q_s^{(a)}\) are then real-rooted.  We show that the three polynomials
  \[
    \begin{aligned}
      \textup{(i)}\quad &T_{a+r+1}Q_{r-1}^{(a)},\\
      \textup{(ii)}\quad &U_{a+r+2}T_{a+r}Q_{r-2}^{(a)},\\
      \textup{(iii)}\quad &U_{a+r+2}U_{a+r+1}Q_{r-1}^{(a)}
    \end{aligned}
  \]
  are pairwise compatible.

  First, (ii) and (iii) arise by applying \(U_{a+r+2}\) to the compatible pair
  at index \(r\).  They are compatible by
  Lemma~\ref{lem:eulerianOperatorPreservation}.

  Next, recurrence \eqref{eq:largerRecurrence} shows that
  \(T_{a+r}Q_{r-2}^{(a)}\) and \(Q_{r-1}^{(a)}\) differ by \(1\).  Both have
  nonnegative coefficients and, by induction, only nonpositive real zeros.
  Lemma~\ref{lem:crossedCompletion}, with parameter \(a+r+1\), therefore gives
  compatibility of (i) and (ii).

  Finally, Lemma~\ref{lem:eulerianOperatorPreservation} gives
  \[
    Q_{r-1}^{(a)}
    \preceq
    U_{a+r+1}Q_{r-1}^{(a)}.
  \]
  Applying Lemma~\ref{lem:mixedRolle} with parameter \(a+r+1\) gives
  compatibility of (i) and (iii).

  We have proved pairwise compatibility of the three polynomials.  Their
  leading coefficients are positive, so the Chudnovsky--Seymour theorem
  implies that the triple is compatible.  At index \(r+1\), the first member
  of the asserted pair is (i).  By linearity and
  \eqref{eq:largerDecomposition}, its second member is (ii) plus (iii).
  Hence the pair at index \(r+1\) is compatible.  Its sum is
  \(Q_{r+1}^{(a)}\), which is therefore real-rooted.  This completes the
  induction.
\end{proof}

\subsection{Specialization to derangements}
\label{sec:specialization}

We take \(a=1\) and set \(r=n-2\).  For \(n\geq3\), recurrence
\eqref{eq:largerRecurrence} becomes
\[
  Q_{n-2}^{(1)}
  =T_nQ_{n-3}^{(1)}+(-1)^{n-2}
  =T_nQ_{n-3}^{(1)}+(-1)^n.
\]
Since \(Q_0^{(1)}=1=Q_2\), comparison with
\eqref{eq:reciprocalRecurrence} gives, by induction,
\[
  Q_{n-2}^{(1)}=Q_n
  \qquad(n\geq2).
\]
Theorem~\ref{thm:largerFamily} applies when \(n\geq4\), while the initial
values \(Q_2=1\) and \(Q_3=2x\) cover the remaining cases.  Thus every
reciprocal derangement polynomial \(Q_n\) has only nonpositive real zeros.
Equation~\eqref{eq:reciprocalPolynomial} now shows that every \(D_n(t)\) has
only nonpositive real zeros.  This proves
Theorem~\ref{thm:derangementRealRooted}.

\subsection{Permutation interpretation of the larger family}
\label{sec:permutationModel}

The family in Subsection~\ref{sec:largerFamily} has a natural deletion-stable
permutation interpretation.  For \(\pi\in\mathfrak S_N\), define
\[
  \defin{\ell(\pi)}
  \coloneqq
  \max\bigl\{1\leq m\leq N:
    \pi^{-1}(1)>\pi^{-1}(2)>\dotsb>\pi^{-1}(m)
  \bigr\}.
\]
For \(a\geq1\), \(r\geq0\), and \(N=a+r+1\), we have
\begin{equation}
  Q_r^{(a)}(x)
  =
  \sum_{\substack{\pi\in\mathfrak S_N\\
      \ell(\pi)\geq a+1\\
      \ell(\pi)\equiv a+1\!\!\pmod 2}}
    x^{\asc(\pi)}.
  \label{eq:permutationInterpretation}
\end{equation}
We prove the identity by showing that its right side satisfies the recurrence
for \(Q_r^{(a)}\).  At \(r=0\), only the decreasing permutation occurs, so
the right side equals \(1\).

We insert the new maximum into a permutation of size \(N-1\).  If the old
permutation has \(k\) ascents, then the insertion positions contribute
\[
  (k+1)x^k+(N-1-k)x^{k+1}=T_N(x^k).
\]
For every old permutation except the decreasing one, insertion leaves
\(\ell\) unchanged.  For the decreasing permutation, insertion of the new
maximum in the first position changes \(\ell=N-1\) to \(\ell=N\).  If \(r\) is
even, the old decreasing permutation is excluded and the new decreasing
permutation is included, giving \(+1\).  If \(r\) is odd, the old decreasing
permutation is included and the new decreasing permutation is excluded,
giving \(-1\).

It follows that the right side of
\eqref{eq:permutationInterpretation} satisfies \eqref{eq:largerRecurrence},
which proves the identity.  Theorem~\ref{thm:largerFamily}, together with the
initial cases \(r=0,1\), now shows that this ascent polynomial has only
nonpositive real zeros.  For \(a=1\), it is the reciprocal derangement
polynomial \(Q_{r+2}\).

\section{Cyclic descents of northeast paths}
\label{sec:cyclic}

We identify a northeast path from \((0,0)\) to \((n,n)\) with its binary step
word \(w_1\dotsm w_{2n}\), using \(0\) for an east step and \(1\) for a
north step.  With indices taken modulo \(2n\), a \defin{cyclic descent} is
an index \(i\) for which \(w_i=1\) and \(w_{i+1}=0\).  We let
\(\defin{\operatorname{cdes}(w)}\) denote the number of cyclic descents of
\(w\), and define
\[
  \defin{C_n(t)}
  \coloneqq
  \sum_{\substack{w\in\{0,1\}^{2n}\\
      w\text{ has \(n\) zeros and \(n\) ones}}}
  t^{\operatorname{cdes}(w)}.
\]
The coefficient formula of P.~Alexandersson, S.~Linusson, S.~Potka, and
J.~Uhlin
\cite[Lemma~43]{AlexanderssonLinussonPotkaUhlin2021} gives
\begin{equation}
  C_n(t)
  =2\sum_{k=1}^{n}\binom nk\binom{n-1}{k-1}t^k.
  \label{eq:cyclicCoefficientFormula}
\end{equation}
OEIS \oeis{A335340} asks whether these polynomials are real-rooted
\cite{OEISA335340}.  We prove the stronger statement below.

\begin{theorem}
  \label{thm:cyclicDescents}
  For every \(n\geq1\), the polynomial \(C_n(t)\) has one simple zero at the
  origin and \(n-1\) simple negative zeros.
\end{theorem}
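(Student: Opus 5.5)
The plan is to write \(C_n\) as a constant multiple of \(t\) times the derivative of the type~\(B\) Narayana polynomial
\[
  N_n(t)\coloneqq\sum_{k=0}^n\binom nk^2t^k,
\]
as recorded in Table~\ref{tab:familySummary}.  Then we apply Rolle's theorem to a polynomial already known to have only simple negative zeros.

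First we verify the identity from \eqref{eq:cyclicCoefficientFormula}.  Since \(k\binom nk=n\binom{n-1}{k-1}\), we have
\[
  \frac{2t}{n}N_n'(t)=\frac{2}{n}\sum_{k=1}^n k\binom nk^2t^k
  =2\sum_{k=1}^n\binom nk\binom{n-1}{k-1}t^k=C_n(t).
\]
So it suffices to show that \(N_n\) has \(n\) simple negative zeros.  Rolle's theorem then gives \(n-1\) simple zeros of \(N_n'\), one strictly between each consecutive pair of zeros of \(N_n\), hence all negative.  Because \(\deg N_n'=n-1\), these are all of its zeros.  Multiplying by \(2t/n\) adjoins the simple zero at the origin.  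It is not a zero of \(N_n'\), since \(N_n'(0)=n^2>0\).

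For the real-rootedness of \(N_n\), I would use the Hadamard-product theorem (Schur--Maló).  The polynomial \((1+t)^n=\sum\binom nk t^k\) has only real negative zeros.  The Schur--Maló theorem says that \(\sum a_kb_kk!\,t^k\) is real-rooted whenever \(\sum a_kt^k\) and \(\sum b_kt^k\) both have only real zeros of the same sign.  It is cleaner to use the Hadamard product in the form of Pólya--Schur multiplier sequences.  The sequence \(\binom nk\), extended by zero for \(k>n\), is a multiplier sequence, because its Jensen symbol \(\sum\binom nk t^k/k!\) is a Laguerre polynomial \(L_n^{(0)}(-t)\) with only negative zeros.  Applying it to \((1+t)^n\) yields \(N_n\) with only real zeros.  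These zeros are negative because the coefficients are positive.

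Simplicity I would get separately.  One route is the classical identity \(N_n(t)=(1-t)^nP_n\bigl(\tfrac{1+t}{1-t}\bigr)\) with the Legendre polynomial \(P_n\), whose \(n\) zeros are simple and lie in \((-1,1)\).  The map \(t\mapsto(1+t)/(1-t)\) is a bijection from \((-\infty,0)\) onto \((-1,1)\), which gives \(n\) distinct negative zeros directly.  This identity is perhaps the most economical proof of the whole statement.

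The main obstacle is choosing a clean citation for the simple, real zeros of \(N_n\).  I would lead with the Legendre substitution, verified by expanding \(P_n(s)=2^{-n}\sum_k\binom nk^2(s-1)^{n-k}(s+1)^k\) at \(s=(1+t)/(1-t)\).  Under this substitution \(s-1=2t/(1-t)\) and \(s+1=2/(1-t)\), and the powers of \(1-t\) and of \(2\) cancel, leaving \(N_n\) up to reindexing \(k\mapsto n-k\).  The multiplier-sequence argument would serve only as a fallback.
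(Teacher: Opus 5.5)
Your proof is correct and is essentially the paper's argument. It uses the identity \(C_n=\frac{2t}{n}N_n'\) and the Legendre substitution \(N_n(t)=(1-t)^nP_n\bigl(\frac{1+t}{1-t}\bigr)\) to get \(n\) simple negative zeros of \(N_n\); Rolle's theorem then handles \(N_n'\), and \(N_n'(0)=n^2\) gives the simple zero at the origin. Your explicit check of the Legendre identity supplies the detail the paper leaves as a standard M\"obius transformation, and the multiplier-sequence fallback is not needed.
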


\begin{proof}
  We use the type~\(B\) Narayana polynomial
  \[
    \defin{N_n^{B}(t)}\coloneqq\sum_{k=0}^{n}\binom nk^2t^k.
  \]
  Since
  \(\binom{n-1}{k-1}=\frac{k}{n}\binom nk\), equation
  \eqref{eq:cyclicCoefficientFormula} becomes
  \begin{equation}
    C_n(t)=\frac{2t}{n}(N_n^{B})'(t).
    \label{eq:cyclicDerivative}
  \end{equation}
  We use the Legendre specialization \(\mathsf P_n^{(0,0)}\) of the Jacobi
  polynomials.  The standard M\"obius transformation gives
  \[
    N_n^{B}(t)
    =(1-t)^n
    \mathsf P_n^{(0,0)}\left(\frac{1+t}{1-t}\right).
  \]
  The polynomial \(\mathsf P_n^{(0,0)}\) has \(n\) simple zeros in
  \((-1,1)\).  The inverse of the displayed fractional-linear map sends this
  interval onto \((-\infty,0)\), so \(N_n^{B}\) has \(n\) simple negative
  zeros.  Rolle's theorem gives \(n-1\) simple negative zeros of
  \((N_n^{B})'\).  Finally, \((N_n^{B})'(0)=n^2\), so
  equation~\eqref{eq:cyclicDerivative} adds a simple zero at the origin.
  This completes the proof.
\end{proof}

\section{Descents whose top is even}
\label{sec:evenTop}

For \(\pi=\pi_1\dotsc\pi_N\in\mathfrak S_N\), a
\defin{descent with even top} is an index \(i\) such that
\(\pi_i>\pi_{i+1}\) and \(\pi_i\) is even.  We define
\[
  \defin{R_N(t)}
  \coloneqq
  \sum_{\pi\in\mathfrak S_N}
  t^{\#\{\text{descents of \(\pi\) with even top}\}}.
\]
The coefficient triangle is indexed by OEIS \oeis{A134434}
\cite{OEISA134434}.
S.~Kitaev and J.~Remmel proved the differential recurrences
\cite[Theorem~1 and Corollary~1]{KitaevRemmel2007}
\begin{align}
  R_{2n+1}(t)
  &=(1-t)R_{2n}'(t)+(2n+1)R_{2n}(t),
  \label{eq:evenToOdd}\\
  R_{2n+2}(t)
  &=t(1-t)R_{2n+1}'(t)
    +\bigl(1+(2n+1)t\bigr)R_{2n+1}(t),
  \label{eq:oddToEven}
\end{align}
for \(n\geq1\), with \(R_2(t)=1+t\).  J.~T.~Hall and J.~B.~Remmel
identified the more general prescribed-top and prescribed-bottom descent
distributions with hit polynomials of Ferrers boards, up to rook equivalence
\cite[Proposition~4.3]{HallRemmel2008}.  Together with the real-rootedness
theorem of J.~Haglund, K.~Ono, and D.~G.~Wagner
\cite[Theorem~1]{HaglundOnoWagner1999}, this already gives rowwise
real-rootedness.  The recurrences above also give strict interlacing of
consecutive rows.

\begin{theorem}
  \label{thm:evenTopInterlacing}
  For every \(N\geq2\), the polynomial \(R_N(t)\) has only simple negative
  zeros.  Moreover, the zeros of \(R_N(t)\) and \(R_{N+1}(t)\) strictly
  interlace.
\end{theorem}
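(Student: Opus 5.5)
The plan is an induction on \(N\) using the two Kitaev--Remmel recurrences \eqref{evenToOdd} and \eqref{oddToEven}, each written as a Rolle factorization in the style of \eqref{TnRolleFactorization}. Direct differentiation gives
\[
  R_{2n+1}=(1-t)^{2n+2}\left(\frac{R_{2n}}{(1-t)^{2n+1}}\right)',
  \qquad
  R_{2n+2}=(1-t)^{2n+3}\left(\frac{tR_{2n+1}}{(1-t)^{2n+2}}\right)'.
\]
The second identity is just \(R_{2n+2}=T_{2n+2}R_{2n+1}\). First I will track degrees from the monomial actions: \(x^k\mapsto (2n+1-k)x^k+kx^{k-1}\) for the first map, and \eqref{TnMonomial} for the second. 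Starting from \(R_2=1+t\) and \(R_3=4+2t\), these give \(\deg R_{2n}=\deg R_{2n+1}=n\) with positive coefficients. The hypotheses \(\deg R_{2n}=n<2n+1\) and \(\deg R_{2n+1}=n\le 2n\) are exactly what makes the rational functions tend to \(0\) at \(-\infty\).

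For the even-to-odd step, suppose \(f=R_{2n}\) has simple negative zeros \(\xi_1>\dotsb>\xi_n\), and put \(G=f/(1-t)^{2n+1}\).
\begin{itemize}
  \item Rolle's theorem gives a zero of \(G'\) in each \((\xi_{i+1},\xi_i)\).
  \item Since \(G(\xi_n)=0\) and \(G\to0\) at \(-\infty\), there is one more zero in \((-\infty,\xi_n)\).
\end{itemize}
This gives \(n=\deg R_{2n+1}\) distinct negative zeros, so these are all of them. They are simple, and none coincides with a \(\xi_i\), because \(R_{2n+1}(\xi_i)=(1-\xi_i)f'(\xi_i)\neq0\). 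Hence the zeros strictly interlace in the same-degree pattern, each new zero lying to the left of the corresponding old one.

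For the odd-to-even step, put \(H=tR_{2n+1}/(1-t)^{2n+2}\). Its zeros are \(0\) and the \(n\) simple negative zeros of \(R_{2n+1}\), and \(H\to0\) at \(-\infty\). Rolle's theorem then gives \(n+1\) distinct zeros of \(H'\):
\begin{itemize}
  \item \(n\) zeros strictly between consecutive points of \(\{\xi_n,\dotsc,\xi_1,0\}\);
  \item one zero in \((-\infty,\xi_n)\).
\end{itemize}
Since \(\deg R_{2n+2}=n+1\), these are all its zeros. They are simple and negative, and they strictly interlace those of \(R_{2n+1}\), with degree increasing by one. As before, the zeros cannot collide because \(R_{2n+2}(\xi_i)=\xi_i(1-\xi_i)R_{2n+1}'(\xi_i)\neq0\).

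The base case is \(R_2\) with zero \(-1\) and \(R_3\) with zero \(-2\), which are simple and (trivially) strictly interlaced. Alternating the two steps then proves both assertions of Theorem~\ref{thm:evenTopInterlacing} for every \(N\ge2\).

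The main point needing care is the counting at the left end. We must check that the limit at \(-\infty\) really supplies the extra zero, which uses \(\deg f<2n+1\) (respectively \(\deg(tR_{2n+1})<2n+2\)). We must also check that the degree is exactly what the count requires, which uses the positive leading coefficients \(2n+1-n\) and \(2n+1-n\) from the monomial formulas. Once these degree facts are verified, the rest is routine Rolle counting.
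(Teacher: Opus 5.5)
Your proof is correct and is essentially the paper's argument. It uses the same induction through the two Kitaev--Remmel recurrences and the same degree bookkeeping (each transition multiplies the leading coefficient by \(n+1\)). It also places one new zero in each gap in the same way, with the leftmost zero supplied by the behaviour at \(-\infty\) and, in the odd-to-even step, the rightmost one by the point \(0\).

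The only difference is how the zeros are located. You apply Rolle's theorem to the factorized rational functions \(R_{2n}/(1-t)^{2n+1}\) and \(tR_{2n+1}/(1-t)^{2n+2}\), the device the paper uses in Lemma~\ref{lem:eulerianOperatorPreservation}. The paper instead evaluates the recurrences at the old zeros, uses the alternating signs of the derivative there, and closes with the sign at \(-\infty\) and the positive value \(R_{2n+2}(0)=R_{2n+1}(0)\).
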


\begin{proof}
  The recurrences show inductively that
  \(\deg R_{2n}=\deg R_{2n+1}=n\) and that every leading coefficient is
  positive: either transition multiplies the preceding leading coefficient
  by \(n+1\).  We proceed by induction, beginning with
  \(R_2(t)=1+t\).  We first assume
  that the zeros of \(R_{2n}\) are
  \[
    q_1<q_2<\dotsb<q_n<0.
  \]
  The leading coefficient is positive, and hence \(R_{2n}'(q_i)\) has sign
  \((-1)^{n-i}\).  Evaluating
  \eqref{eq:evenToOdd} at \(q_i\) gives
  \[
    R_{2n+1}(q_i)=(1-q_i)R_{2n}'(q_i).
  \]
  Since \(1-q_i>0\), these values alternate in sign.  The sign of
  \(R_{2n+1}\) at minus infinity is \((-1)^n\), so it has exactly one zero
  in each of
  \[
    (-\infty,q_1),(q_1,q_2),\dotsc,(q_{n-1},q_n).
  \]
  Writing the zeros as \(p_1<\dotsb<p_n\), we obtain
  \begin{equation}
    p_1<q_1<p_2<q_2<\dotsb<p_n<q_n<0.
    \label{eq:evenOddInterlacing}
  \end{equation}

  Next evaluate \eqref{eq:oddToEven} at the zeros \(p_i\):
  \[
    R_{2n+2}(p_i)=p_i(1-p_i)R_{2n+1}'(p_i).
  \]
  Here \(p_i(1-p_i)<0\), so the signs again alternate, with the derivative
  signs reversed.  Since \(R_{2n+2}(0)=R_{2n+1}(0)>0\), the polynomial
  \(R_{2n+2}\) has one zero in each of
  \[
    (-\infty,p_1),(p_1,p_2),\dotsc,(p_{n-1},p_n),(p_n,0).
  \]
  If these zeros are \(s_1<\dotsb<s_{n+1}\), then
  \begin{equation}
    s_1<p_1<s_2<p_2<\dotsb<s_n<p_n<s_{n+1}<0.
    \label{eq:oddEvenInterlacing}
  \end{equation}
  Equations \eqref{eq:evenOddInterlacing} and
  \eqref{eq:oddEvenInterlacing} close the induction.  Every displayed
  inequality is strict, so all zeros are simple.  This proves the theorem.
\end{proof}

\section{Super-Eulerian polynomials}
\label{sec:superEulerian}

U.~Shankar introduced the \((l,r)\)-Eulerian numbers, a two-parameter family
in which the two coefficients of the Eulerian recurrence are raised to a
fixed positive integral power~\cite{Shankar2025}.  We consider the case
\(r=1\).  Fix \(l\geq1\), put \(\defin{E_{1,0}^{(l)}}\coloneqq1\), and, for
\(n\geq2\), define
\begin{equation}
  \defin{E_{n,k}^{(l)}}
  \coloneqq
  (k+1)^lE_{n-1,k}^{(l)}+(n-k)^lE_{n-1,k-1}^{(l)},
  \label{eq:superEulerianRecurrence}
\end{equation}
where entries outside \(0\leq k\leq n-1\) are zero.  The corresponding
\defin{super-Eulerian polynomial} is
\[
  \defin{E_n^{(l)}(t)}
  \coloneqq\sum_{k=0}^{n-1}E_{n,k}^{(l)}t^k.
\]
For \(l=1\), these are the ordinary Eulerian polynomials.  Shankar proved
that every row is palindromic and log-concave and that the rows are
gamma-positive for \(l=2\).  Real-rootedness for \(l\geq2\), and
gamma-positivity for \(l\geq3\), are recorded there only as observations
from limited data~\cite[Section~4.2]{Shankar2025}.
Since \(E_n^{(l)}\) is palindromic of degree \(n-1\), it has a unique
expansion
\[
  E_n^{(l)}(t)
  =\sum_{i=0}^{\lfloor(n-1)/2\rfloor}
    \gamma_{n,i}^{(l)}t^i(1+t)^{n-1-2i}.
\]
We call \(\sum_i\gamma_{n,i}^{(l)}t^i\) its
\defin{gamma-polynomial}.

Let \(\defin{\mathcal{PF}}\) be the family of real polynomials with
nonnegative coefficients and only nonpositive real zeros.  For
\(f=\sum_i a_it^i\) and \(g=\sum_i b_it^i\), let
\[
  \defin{f\had g}\coloneqq\sum_i a_ib_it^i
\]
be their coefficientwise Hadamard product.  The P\'olya--Schur--Wagner
theorem says that \(\mathcal{PF}\) is closed under Hadamard
products~\cite{SchurPolya1914,Wagner1992}.  The two-pair theorem of
J.~Garloff and D.~G.~Wagner says that, for \(f,g,p,r\in\mathcal{PF}\) such
that the two products below are nonzero,
\begin{equation}
  f\preceq g\quad\text{and}\quad p\preceq r
  \quad\Longrightarrow\quad
  f\had p\preceq g\had r.
  \label{eq:garloffWagnerTwoPair}
\end{equation}
See~\cite[Theorem~4(b)]{GarloffWagner1996}.  We also use the cone property
\cite[Lemma~3.2(ii)]{AlexanderssonNabawanda2022}: if \(h\preceq f\) and
\(h\preceq g\), then \(h\preceq af+bg\) for \(a,b\geq0\) not both zero.

\begin{theorem}
  \label{thm:superEulerian}
  Let \(l\geq1\).  For every \(n\geq1\), the polynomial
  \(E_n^{(l)}(t)\) has only nonpositive real zeros.  Moreover,
  \[
    E_n^{(l)}\preceq E_{n+1}^{(l)}.
  \]
  The gamma-polynomial of \(E_n^{(l)}\) is real-rooted and has nonnegative
  coefficients.
\end{theorem}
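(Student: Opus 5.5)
The plan is to prove real-rootedness and the interlacing \(E_n^{(l)}\preceq E_{n+1}^{(l)}\) together by induction on \(n\). The engine is a Hadamard-product decomposition of one step of recurrence~\eqref{eq:superEulerianRecurrence}. The gamma statement should then follow from palindromicity and real-rootedness alone.

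\emph{Step 1: normalize by powers of binomial coefficients.} Put \(a_{n,k}\coloneqq E_{n,k}^{(l)}/\binom{n-1}{k}^{l}\). Using
\[
  \binom{n-2}{k}\Big/\binom{n-1}{k}=\frac{n-1-k}{n-1},
  \qquad
  \binom{n-2}{k-1}\Big/\binom{n-1}{k}=\frac{k}{n-1},
\]
the recurrence becomes
\[
  a_{n,k}=\Bigl(\tfrac{(k+1)(n-1-k)}{n-1}\Bigr)^{l}a_{n-1,k}
  +\Bigl(\tfrac{k(n-k)}{n-1}\Bigr)^{l}a_{n-1,k-1}.
\]
The aim is to rewrite this so that each \(l\)-th power is split into \(l\) linear factors \((k+1)(n-1-k)\) and \(k(n-k)\). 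Each such factor is the coefficient sequence of an explicit polynomial in \(\mathcal{PF}\) built from \((1+t)^{n-1}\), \(t\,\frac{d}{dt}\) and reversal. Then \(E_n^{(l)}\) becomes a Hadamard product of \(l\) copies of an \(l=1\)-type object. Concretely, I would aim for the identity
\[
  E_n^{(l)}=\bigl((1+t)^{n-1}\bigr)^{\had(1-l)}\had \prod\nolimits^{\had}_{\text{$l$ factors}}(\text{Eulerian-type row}),
\]
or, if that fails, a one-step version:
\[
  E_n^{(l)} = P_n\had E_{n-1}^{(l)} + t\,\bigl(R_n\had E_{n-1}^{(l)}\bigr),
\]
with \(P_n,R_n\in\mathcal{PF}\). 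Here \(P_n,R_n\) should be Hadamard \(l\)-th powers of the linear-coefficient polynomials.

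\emph{Step 2: the induction.} Assume \(E_{n-1}^{(l)}\in\mathcal{PF}\) and \(E_{n-1}^{(l)}\preceq E_n^{(l)}\). The P\'olya--Schur--Wagner theorem keeps every Hadamard factor in \(\mathcal{PF}\). The Garloff--Wagner two-pair theorem~\eqref{eq:garloffWagnerTwoPair} transports the two interlacings \(E_{n-1}\preceq E_n\) and \(P_{n}\preceq P_{n+1}\) (respectively \(R_n\preceq R_{n+1}\)) to the two summands. The summand shifted by \(t\) has the zero at the origin adjoined, which preserves the orientation \(E_n\preceq t(\cdot)\). The cone property then yields a common interlacer \(E_n^{(l)}\preceq E_{n+1}^{(l)}\); in particular \(E_{n+1}^{(l)}\in\mathcal{PF}\). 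The base cases \(E_1=1\) and \(E_2=1+t\) are immediate.

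\emph{Step 3: gamma-polynomial.} \(E_n^{(l)}\) is palindromic of degree \(n-1\), has positive coefficients, and has only nonpositive real zeros. Its zeros therefore come in pairs \(\rho,1/\rho\), with \(-1\) as a possible single zero. Grouping \((t-\rho)(t-1/\rho)=t^2+(2+s)t+1=(1+t)^2+st\) with \(s=-\rho-1/\rho-2\geq0\) factors \(E_n^{(l)}\) as a positive multiple of
\[
  (1+t)^{\epsilon}\prod_j\bigl((1+t)^2+s_jt\bigr),
\]
where \(\epsilon\in\{0,1\}\) is the parity of \(n-1\). Expanding shows the gamma-polynomial is \(\prod_j(1+s_jt)\) up to a positive constant. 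It is therefore real-rooted with nonnegative coefficients.

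The main obstacle is Step~1: finding the right normalization so that the two coefficient weights \((k+1)^l\) and \((n-k)^l\) become Hadamard powers of polynomials in \(\mathcal{PF}\) that themselves interlace from \(n\) to \(n+1\). I would first test the \(\binom{n-1}{k}^{l}\) normalization above on \(l=2\), \(n\leq4\) (\(E_4^{(2)}=1+41t+41t^2+t^3\)). If the exact Hadamard factorization fails, I would fall back on the one-step form in Step~1 with \(P_n,R_n\) given by Hadamard powers of \(\frac{d}{dt}\)-images of \((1+t)^{n}\).
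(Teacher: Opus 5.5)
Your proposal has a genuine gap at Step 1, which you flagged as the main obstacle, and both target forms you propose there fail. With ordinary Eulerian rows, the first identity is false already for $l=2$, $n=4$: it gives $121/3$ instead of $41$. Moreover, $\bigl((1+t)^{n-1}\bigr)^{\had(1-l)}$ is not in $\mathcal{PF}$; for $n=3$, $l=2$ it is $1+t/2+t^2$. So P\'olya--Schur--Wagner cannot be applied to it.

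In the fallback, every $E_{n-1,k}^{(l)}$ with $0\le k\le n-2$ is positive, so the identity forces $P_n$ to begin $1+2^lt+3^lt^2+\dotsb$. By Newton's inequalities, no polynomial in $\mathcal{PF}$ begins this way once $n$ is large relative to $l$. For example, nothing in $\mathcal{PF}$ begins $1+4t+9t^2$, since $16<2\cdot 9$. The $\mathcal{PF}$ kernels you have in mind, built from $(1+t)^{n-1}$ and $\theta=t\,\frac{d}{dt}$, reproduce the recurrence only when Hadamard-multiplied against the normalized row $\widetilde E_{n-1}=\sum_k a_{n-1,k}t^k$. Nothing in your induction shows $\widetilde E_{n-1}\in\mathcal{PF}$, and this does not follow from $E_{n-1}\in\mathcal{PF}$.

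Step 2 is also invalid as stated. The two-pair theorem applied to $E_{n-1}\preceq E_n$ and $P_n\preceq P_{n+1}$ relates a summand of $E_n$ to a summand of $E_{n+1}$. The cone property instead needs $E_n$ itself to interlace each summand of $E_{n+1}$. In general $a\preceq b$ and $c\preceq d$ do not give $a+c\preceq b+d$.

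The missing idea is already in your normalized recurrence. The second weight $\bigl(\tfrac{k(n-k)}{n-1}\bigr)^l$ is the first weight $\phi(j)^l$, with $\phi(j)=\tfrac{(j+1)(n-1-j)}{n-1}$, evaluated at $j=k-1$. Hence
\[
  \widetilde E_n=(1+t)\,\phi(\theta)^l\,\widetilde E_{n-1}.
\]
Here $(\theta+1)f=(tf)'$, and $n-1-\theta$ is $\theta$ conjugated by reversal on degree at most $n-2$. So both factors preserve $\mathcal{PF}$, giving $\widetilde E_n\in\mathcal{PF}$. Then $E_n=B_n\had\widetilde E_n$ with $B_n=\bigl((1+t)^{n-1}\bigr)^{\had l}$, so $E_n\in\mathcal{PF}$ by P\'olya--Schur--Wagner.

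The interlacing then needs no inductive hypothesis. Write $E_{n+1}=\Lambda_n\had\widetilde E_n+\bigl(t^n\Lambda_n(1/t)\bigr)\had\bigl(t\widetilde E_n\bigr)$, where $\Lambda_n=\bigl((1+t)^{n-2}(1+nt)\bigr)^{\had l}$. Apply the two-pair theorem to $B_n\preceq\Lambda_n$ with $\widetilde E_n\preceq\widetilde E_n$, and to $B_n\preceq t^n\Lambda_n(1/t)$ with $\widetilde E_n\preceq t\widetilde E_n$. This makes $E_n$ a common interlacer of both summands, and the cone property gives $E_n\preceq E_{n+1}$. Your Step 3 is correct; it is the standard argument behind the gamma equivalence the paper cites.
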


\begin{proof}
  We first prove real-rootedness.  Write \(\theta f=tf'\), and normalize by
  \[
    \defin{\widetilde E_n^{(l)}(t)}
    \coloneqq
    \sum_{i=0}^{n-1}
      \frac{E_{n,i}^{(l)}}{\binom{n-1}i^l}t^i
    \qquad(n\geq1).
  \]
  A coefficient comparison in \eqref{eq:superEulerianRecurrence} gives,
  for \(n\geq2\),
  \begin{equation}
    \widetilde E_n^{(l)}
    =(1+t)
      \left(
        \frac{(\theta+1)(n-1-\theta)}{n-1}
      \right)^l
      \widetilde E_{n-1}^{(l)}.
    \label{eq:superEulerianNormalizedTransfer}
  \end{equation}
  Both operators in this formula preserve \(\mathcal{PF}\) on polynomials of
  degree at most \(n-2\).  Indeed, \((\theta+1)f=(tf)'\), so one case follows
  from Rolle's theorem.  Applying the same argument to
  \(t^{n-1}f(1/t)\), and then reciprocating back, shows that
  \((n-1-\theta)f\in\mathcal{PF}\).  Starting with
  \(\widetilde E_1^{(l)}=1\),
  equation~\eqref{eq:superEulerianNormalizedTransfer} therefore shows that
  every normalized row belongs to \(\mathcal{PF}\).

  For the Hadamard factorization, set
  \[
    \defin{B_n^{(l)}(t)}
    \coloneqq\sum_{i=0}^{n-1}\binom{n-1}i^lt^i.
  \]
  We have the coefficientwise factorization
  \begin{equation}
    E_n^{(l)}=B_n^{(l)}\had\widetilde E_n^{(l)}.
    \label{eq:superEulerianHadamardFactorization}
  \end{equation}
  The polynomial \(B_n^{(l)}\) is the \(l\)-fold Hadamard power of
  \((1+t)^{n-1}\), so \(B_n^{(l)}\in\mathcal{PF}\).  Closure under Hadamard
  products in~\eqref{eq:superEulerianHadamardFactorization} now shows that
  \(E_n^{(l)}\in\mathcal{PF}\).

  By Shankar's palindromicity~\cite[Section~4.2]{Shankar2025},
  \(E_n^{(l)}\) is palindromic of degree \(n-1\).  Since
  \(\binom{n-1}i=\binom{n-1}{n-1-i}\), so is
  \(\widetilde E_n^{(l)}\).  For \(n\geq2\), we have
  \[
    (1+t)^{n-1}
      \preceq (1+t)^{n-2}(1+nt),
    \qquad
    (1+t)^{n-1}
      \preceq t(1+t)^{n-2}(t+n).
  \]
  For the interlacing step, set
  \[
    \defin{\Lambda_n^{(l)}(t)}
      \coloneqq\sum_{i=0}^{n-1}(i+1)^l\binom{n-1}i^lt^i.
  \]
  Thus \(B_n^{(l)}\), \(\Lambda_n^{(l)}\), and
  \(t^n\Lambda_n^{(l)}(1/t)\) are the corresponding \(l\)-fold Hadamard
  powers, respectively.  Iterating \eqref{eq:garloffWagnerTwoPair} gives
  \begin{equation}
    B_n^{(l)}\preceq\Lambda_n^{(l)},
    \qquad
    B_n^{(l)}\preceq t^n\Lambda_n^{(l)}(1/t).
    \label{eq:superEulerianKernelRelations}
  \end{equation}

  Palindromicity and equation~\eqref{eq:superEulerianRecurrence} give
  \[
    E_{n+1}^{(l)}
    =(\theta+1)^lE_n^{(l)}
     +t^n\bigl((\theta+1)^lE_n^{(l)}\bigr)(1/t).
  \]
  In addition to~\eqref{eq:superEulerianHadamardFactorization}, we have
  \[
    (\theta+1)^lE_n^{(l)}
      =\Lambda_n^{(l)}\had\widetilde E_n^{(l)},
    \qquad
    t^n\bigl((\theta+1)^lE_n^{(l)}\bigr)(1/t)
      =t^n\Lambda_n^{(l)}(1/t)\had t\widetilde E_n^{(l)}.
  \]
  Since \(\widetilde E_n^{(l)}\preceq\widetilde E_n^{(l)}\) and
  \(\widetilde E_n^{(l)}\preceq t\widetilde E_n^{(l)}\), the two relations in
  \eqref{eq:superEulerianKernelRelations} and
  \eqref{eq:garloffWagnerTwoPair} give
  \[
    E_n^{(l)}\preceq(\theta+1)^lE_n^{(l)},
    \qquad
    E_n^{(l)}
      \preceq t^n\bigl((\theta+1)^lE_n^{(l)}\bigr)(1/t).
  \]
  The cone property now gives \(E_n^{(l)}\preceq E_{n+1}^{(l)}\).
  The case \(n=1\) is \(1\preceq1+t\).

  Finally, \(E_n^{(l)}\) is palindromic, has nonnegative coefficients, and is
  real-rooted.  The standard gamma-polynomial equivalence for palindromic
  polynomials therefore shows that its gamma-polynomial is real-rooted and
  has nonnegative coefficients~\cite[Observation~4.2]{Petersen2015}.  This
  completes the proof.
\end{proof}

\section{Two further families}
\label{sec:furtherFamilies}

\subsection{Increasing runs in ternary words}

A \defin{strictly increasing run} of a word is a maximal consecutive factor
whose letters increase from left to right.  For a ternary word \(w\), we let
\(\defin{\run(w)}\) be its number of such factors, and define
\[
  \defin{W_n(t)}
  \coloneqq\sum_{w\in\{0,1,2\}^n}t^{\run(w)}
  \qquad(n\geq1),
\]
with \(\defin{W_0(t)}\coloneqq1\).

The coefficient triangle is indexed by OEIS \oeis{A120987}
\cite{OEISA120987}.  E.~Deutsch posed the original ternary-word enumeration
conjecture there, and G.~Cabrele proved it together with its extension to
arbitrary finite alphabets \cite{Cabrele2015}.  Cabrele's generating function
specializes here to
\begin{equation}
  \defin{\mathcal W(z,t)}
  \coloneqq\sum_{n\geq0}W_n(t)z^n
  =\frac{1}{1-3tz-3t(1-t)z^2-t(1-t)^2z^3}.
  \label{eq:ternaryRunGeneratingFunction}
\end{equation}

\begin{theorem}
  \label{thm:ternaryRuns}
  Every polynomial \(W_n(t)\) is real-rooted.  Moreover, the zeros of
  \(W_n(t)\) and \(W_{n+1}(t)\) weakly interlace for every \(n\geq0\).
\end{theorem}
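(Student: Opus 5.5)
We derive the differential recurrence from the table, \((n+1)W_{n+1}=(-n+(4n+3)t)W_n+3t(1-t)W_n'\), from the generating function \eqref{eq:ternaryRunGeneratingFunction}, and then run a sign-alternation induction in the style of Theorem~\ref{thm:evenTopInterlacing}.

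\emph{Step 1: the recurrence.} Write \(P(z,t)=1-3tz-3t(1-t)z^2-t(1-t)^2z^3\), so that \(\mathcal W=1/P\). We must check the first-order PDE
\[
  \partial_z\mathcal W=\bigl(-z\partial_z+3t+4tz\partial_z\bigr)\mathcal W+3t(1-t)\partial_t\mathcal W ,
\]
taking coefficients of \(z^n\). For \(\mathcal W=1/P\) this is equivalent to a polynomial identity in \(P\), \(\partial_zP\), and \(\partial_tP\). The key observation is that
\[
  P=\frac{(1-t)-t\bigl((1-t)z+1\bigr)^3+t}{1-t}\cdot\frac{1}{1}
\]
after simplification. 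Indeed \(1-tu^3\) with \(u=1+(1-t)z\) gives \((1-t)P=1-t\,u^3\). This makes the identity routine; it is only a finite check. Degree bookkeeping then gives \(\deg W_n=n\) with positive leading coefficient \(\binom{n+2}{2}\)-type growth, and \(W_n(0)=0\) for \(n\ge1\). The term in \(-n\) forces \(W_n(0)\) to vanish, matching \(W_n\) having no constant term.

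\emph{Step 2: induction.} Factor the right side as \(t\,\cdot\) something where possible. Since \(W_n=tV_n\) for \(n\geq1\), substitute to get a recurrence of the form
\[
  (n+1)V_{n+1}=\bigl(3-n t^{-1}\cdots\bigr)
\]
This is messy. Instead, evaluate the recurrence directly at the zeros of \(W_n\). If \(W_n\) has simple zeros \(q_1<\dots<q_{n-1}<q_n=0\), then at the negative zeros \(q_i<0\) we have
\[
  (n+1)W_{n+1}(q_i)=3q_i(1-q_i)W_n'(q_i),
\]
where the factor \(3q_i(1-q_i)<0\) is sign-definite. Hence \(W_{n+1}\) alternates in sign at \(q_1,\dots,q_{n-1}\), giving \(n-2\) zeros between them. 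It has a zero at \(0\). The sign at \(-\infty\), compared with the sign just left of \(0\), yields one further zero in \((-\infty,q_1)\) and one in \((q_{n-1},0)\).

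For the sign just left of \(0\), compare the derivatives at \(0\) via the recurrence at the linear coefficient. Writing \(W_n=a_nt+\dots\), we get \((n+1)a_{n+1}=-na_n+3a_n=(3-n)a_n\). This is negative for \(n\ge4\), which shows that the sign pattern near \(0\) is subtle. The coefficients are not all positive; for example \(W_2=3t+6t^2\) but later rows have mixed signs. So zeros may be positive, or may lie in \((0,1)\).

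\emph{Main obstacle.} The correct framework is therefore to locate zeros on the whole real line, using the points \(t=0\) and \(t=1\) where the factor \(t(1-t)\) vanishes, and to count the \(n\) zeros of \(W_n\) across the intervals \((-\infty,0)\), \((0,1)\), \((1,\infty)\). At a zero \(q\notin\{0,1\}\) of \(W_n\), the value \(W_{n+1}(q)\) has sign \(\operatorname{sgn}(q(1-q))\operatorname{sgn}W_n'(q)\). Between consecutive zeros of \(W_n\) in the same interval, this forces a zero of \(W_{n+1}\). At the boundary points we use
\[
  (n+1)W_{n+1}(1)=(3n+3)W_n(1),\qquad W_n(1)=3^n ,
\]
which follows from \(P(z,1)=1-3z\). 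We also use the behavior at \(0\) and at \(\pm\infty\). Together these supply the missing zeros, so weak interlacing means \(W_n\) and \(W_{n+1}\) may share the zero \(0\).

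I expect the bookkeeping of which intervals receive the extra zeros to be the hard part. I would first compute \(W_3,W_4,W_5\) to see the zero pattern, then formulate the induction hypothesis as a precise statement about how many zeros lie in each of the three intervals and their signs at the endpoints. Closing the count requires showing that exactly \(n+1\) real zeros arise. If the interval count fails, the fallback is to view \(3t(1-t)\partial_t+\text{linear}\) as a Rolle factorization \(W_{n+1}\propto w(t)\bigl(W_n\,g(t)\bigr)'\). Here \(g=t^{\alpha}(1-t)^{\beta}\) with \(\alpha=-n/3\) and \(\beta\) chosen to match \(4n+3\). Rolle's theorem applied on each interval, with boundary behavior of \(g\), then gives the zeros directly, in the style of \eqref{eq:TnRolleFactorization}.
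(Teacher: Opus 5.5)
Your Step~1 is fine: the identity \((1-t)P=1-tu^3\), \(u=1+(1-t)z\), is correct despite the garbled display, and the recurrence matches the paper's. The gap is in Step~2 and in the diagnosis that follows it.

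Your computation \((n+1)a_{n+1}=(3-n)a_n\) gives \(a_4=0\), since no strictly increasing ternary word has length \(4\). Hence \(a_n=0\) for all \(n\geq4\); the recurrence never produces a negative coefficient. Since \(W_n\) counts words, its coefficients are nonnegative and its real zeros are nonpositive. The three-interval analysis on \((0,1)\) and \((1,\infty)\) is therefore a red herring.

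What the vanishing of \(a_n\) really signals is a multiple zero at the origin. Each strictly increasing run over \(\{0,1,2\}\) has length at most \(3\), and \(012012\dotsm\) attains this bound, so \(t^{\lceil n/3\rceil}\) exactly divides \(W_n\). For instance, \(W_4=t^2(15+51t+15t^2)\). Your sign count assumes simple zeros \(q_1<\dotsb<q_n=0\), and its conclusion already fails at \(n=3\). There \(W_3=t+16t^2+10t^3\) has \(q_2\approx-0.065\), and \(W_4\) is positive on \((q_2,0)\). The zero you place in that interval has instead merged into the double zero at \(0\). This is exactly why the statement says \emph{weakly}. As written, the proposal never establishes real-rootedness.

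The paper avoids simple-zero bookkeeping altogether. Nonnegativity puts every zero of \(W_n\) in \((-\infty,0]\), where \(3t(1-t)\le0\), vanishing only at \(0\). Rolle gives \(W_n'\preceq W_n\). The Liu--Wang criterion only needs the coefficient of \(W_n'\) to be \(\le0\) at the zeros of \(W_n\) and allows multiple zeros. It then yields real-rootedness of \(W_{n+1}\) and \(W_n\preceq W_{n+1}\).

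Your approach can also be rescued by dividing out the origin. Write \(W_n=t^mV_n\) with \(m=\lceil n/3\rceil\) and \(V_n(0)>0\). Substitution gives
\[
(n+1)V_{n+1}=(4n+3-3m)V_n+3(1-t)V_n' \qquad\text{when } 3\mid n,
\]
and otherwise
\[
(n+1)V_{n+1}=\bigl(3m-n+(4n+3-3m)t\bigr)V_n+3t(1-t)V_n', \qquad 3m-n\in\{1,2\}.
\]
These are exactly the two shapes in the proof of Theorem~\ref{thm:evenTopInterlacing}. The same sign count gives simple negative zeros of each \(V_n\) and strict interlacing of consecutive cofactors, hence the weak interlacing of the \(W_n\).

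Your fallback Rolle factorization also works, with \(g=|t|^{-n/3}(1-t)^{-(n+1)}\). However, the behaviour of \(gW_n\) at \(0^-\) depends on whether \(m>n/3\), so it too requires the exact order of vanishing at the origin.
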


\begin{proof}
  Differentiating the rational function in
  \eqref{eq:ternaryRunGeneratingFunction} gives
  \[
    \bigl(1-(4t-1)z\bigr)\frac{\partial \mathcal W}{\partial z}
    =3t\mathcal W+3t(1-t)\frac{\partial \mathcal W}{\partial t}.
  \]
  Comparing coefficients of \(z^n\), we obtain
  \begin{equation}
    (n+1)W_{n+1}(t)
    =\bigl(-n+(4n+3)t\bigr)W_n(t)
      +3t(1-t)W_n'(t).
    \label{eq:ternaryRunDifferentialRecurrence}
  \end{equation}
  The polynomial \(W_n\) has nonnegative coefficients and degree \(n\): its
  leading coefficient counts the weakly decreasing ternary words of length
  \(n\), and is therefore positive.

  We proceed by induction.  The initial pair \(W_0=1\), \(W_1=3t\) has
  the required property.  For \(n\geq1\), assume that \(W_n\) is
  real-rooted.  Its nonnegative coefficients force all its zeros to be
  nonpositive.  The coefficient \(3t(1-t)/(n+1)\) of \(W_n'\) in
  \eqref{eq:ternaryRunDifferentialRecurrence} is therefore nonpositive at
  every zero of \(W_n\).  Rolle's theorem gives \(W_n'\preceq W_n\).
  Since \(\deg W_{n+1}=\deg W_n+1\) and both leading coefficients are
  positive, the interlacing criterion of L.~L.~Liu and Y.~Wang
  \cite[Theorem~2.3]{LiuWang2007} applies with this sign.  It
  shows that \(W_{n+1}\) is real-rooted and that \(W_n\) interlaces
  \(W_{n+1}\).  This closes the induction and proves the theorem.
\end{proof}

\subsection{Peaks in permutations}

For \(\pi=\pi_1\dotsm\pi_n\in\mathfrak S_n\), an index
\(2\leq i\leq n-1\) is a \defin{peak} if
\(\pi_{i-1}<\pi_i>\pi_{i+1}\).  We let \(\defin{\pk(\pi)}\) be the number of
peaks and define
\[
  \defin{K_n(t)}
  \coloneqq
  \sum_{\pi\in\mathfrak S_n}t^{\pk(\pi)}
  \qquad(n\geq1).
\]
The coefficient triangle is indexed by OEIS \oeis{A008303}
\cite{OEISA008303}.
Warren and Seneta obtained the recurrence
\begin{equation}
  K_n(t)
  =\bigl(2+(n-2)t\bigr)K_{n-1}(t)
   +2t(1-t)K_{n-1}'(t),
  \qquad n\geq2,
  \label{eq:peakRecurrence}
\end{equation}
and used it to prove real-rootedness and consecutive interlacing
\cite{WarrenSeneta1996}.  The following theorem restates their result in the
parity-sensitive form used below.

\begin{theorem}
  \label{thm:permutationPeaks}
  For every \(n\geq1\), the polynomial \(K_n(t)\) is real-rooted.  For
  \(n\geq3\), it has \(\lfloor(n-1)/2\rfloor\) simple negative zeros, and
  the zeros of \(K_n(t)\) and \(K_{n+1}(t)\) strictly alternate.
\end{theorem}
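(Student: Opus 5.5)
The plan is a sign-alternation induction of the same kind as in the proof of Theorem~\ref{thm:evenTopInterlacing}, driven by recurrence \eqref{eq:peakRecurrence}.

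\textbf{Degrees and signs.} From \eqref{eq:peakRecurrence}, the coefficient of \(t^k\) in \(K_n\) equals
\[
  (2+4k)[t^k]K_{n-1}+(n-2k)[t^{k-1}]K_{n-1}.
\]
Since a peak needs two neighbours, \(\deg K_{n-1}\leq\lfloor(n-2)/2\rfloor\), and for that range of \(k\) the factor \(n-2k\) is positive. Induction then shows three things: \(K_n\) has nonnegative coefficients, \(K_n(0)=2^{n-1}>0\), and \(\deg K_n=d_n\coloneqq\lfloor(n-1)/2\rfloor\) with positive leading coefficient. In particular \(d_{n+1}=d_n\) when \(n\) is even, and \(d_{n+1}=d_n+1\) when \(n\) is odd. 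The small rows \(K_1,K_2,K_3,K_4\) in Table~\ref{tab:familySummary} serve as base cases. For example, \(K_3=4+2t\) has zero \(-2\), and \(K_4=8+16t\) has zero \(-1/2\), so they alternate.

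\textbf{Inductive step.} Assume that \(K_n\) (\(n\geq3\)) has \(d=d_n\) simple negative zeros \(q_1<\dots<q_d<0\). Evaluating \eqref{eq:peakRecurrence} at these zeros gives
\[
  K_{n+1}(q_i)=2q_i(1-q_i)K_n'(q_i).
\]
Here \(2q_i(1-q_i)<0\), and because the zeros are simple, the values \(K_n'(q_i)\) alternate in sign, with \(K_n'(q_d)>0\). Hence \(K_{n+1}(q_i)\) alternates in sign, with \(K_{n+1}(q_d)<0\). Together with \(K_{n+1}(0)>0\), this places one zero of \(K_{n+1}\) in each of the intervals \((q_{i-1},q_i)\) for \(i=2,\dots,d\), and one in \((q_d,0)\). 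That accounts for \(d\) zeros.

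\textbf{Counting the remaining zero.} The sign of \(K_{n+1}\) at \(-\infty\) is \((-1)^{d_{n+1}}\), while \(K_{n+1}(q_1)\) has sign \(-(-1)^{d-1}=(-1)^d\).
\begin{itemize}
  \item If \(d_{n+1}=d\), the two signs agree. There is no further sign change, and the \(d\) zeros found are all of them, so \(K_n\) and \(K_{n+1}\) have equal degree with zeros \(q_1<s_1<q_2<\dots<q_d<s_d<0\).
  \item If \(d_{n+1}=d+1\), the signs differ. This forces an extra zero in \((-\infty,q_1)\), which exhausts the degree, and the alternation is \(s_1<q_1<s_2<\dots<q_d<s_{d+1}<0\).
\end{itemize}
In both cases every inequality is strict, so the zeros of \(K_{n+1}\) are simple and negative, and they strictly alternate with those of \(K_n\). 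This closes the induction. Real-rootedness of \(K_1\) and \(K_2\), which are constants, is trivial.

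\textbf{Main obstacle.} The delicate point is the parity bookkeeping in the last step. One has to confirm that the sign at \(-\infty\) matches the parity of \(d_{n+1}\) exactly, so that in the equal-degree case no stray zero can appear to the left of \(q_1\). One must also check that the leading-coefficient computation rules out any accidental degree drop; positivity of \(n-2k\) at \(k=d_n+1\) when \(n\) is odd handles this.
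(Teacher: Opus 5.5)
Your proof is correct and is essentially the paper's argument: evaluate the recurrence at the simple zeros of the previous row, use the positive value at $0$, and let the top coefficient, $(n-2-2d)$ times the leading coefficient of $K_{n-1}$, decide whether a leftmost zero appears. Because your final case split is by degree rather than by parity, two slips in your bookkeeping do no harm, but you should fix them. First, the coefficient factor is $2+2k$, not $2+4k$. Second, the parity is reversed: $d_{n+1}=d_n+1$ exactly when $n$ is even, and your own base rows $K_3,K_4$ both have degree $1$. The reason is that the factor $n-2k$ vanishes at $k=\deg K_{n-1}+1$ when $n$ is even, so on your stated range it is only nonnegative, not positive.
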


\begin{proof}
  We have \(K_1=1\), \(K_2=2\), and \(K_3=4+2t\).  For \(n\geq4\), suppose
  that \(K_{n-1}\) has degree \(d=\lfloor(n-2)/2\rfloor\), positive leading
  coefficient, and simple negative zeros.  At any such zero \(r\), recurrence
  \eqref{eq:peakRecurrence} gives
  \[
    K_n(r)=2r(1-r)K_{n-1}'(r).
  \]
  These values alternate in sign, while \(K_n(0)=2K_{n-1}(0)>0\).  Hence
  there is one new zero between each consecutive pair and one between the
  rightmost zero and \(0\).  The coefficient of \(t^{d+1}\) in \(K_n\) is
  \((n-2-2d)\) times the leading coefficient of \(K_{n-1}\).  It vanishes
  when \(n\) is even, and is positive when \(n\) is odd; in the latter case,
  comparison at \(-\infty\) supplies the remaining leftmost zero.  Degree
  counting makes all these zeros simple and proves the stated strict
  alternation.  In the even case, the positive value at zero and the negative
  zeros also force the leading coefficient to be positive; in the odd case,
  this follows from the displayed top coefficient.  The induction closes.
\end{proof}

We next refine by peak values.  If
\(\defin{\operatorname{PKV}(\pi)}\) is the set of values at the peaks of
\(\pi\), P.~Alexandersson and O.~Nabawanda considered
\[
  \defin{\widehat K_n(\boldsymbol{x})}
  \coloneqq
  \sum_{\pi\in\mathfrak S_n}
  \prod_{j\in\operatorname{PKV}(\pi)}x_j.
\]
Its diagonal specialization is \(K_n(t)\).  The stability of
\(\widehat K_n\), together with weighted consecutive interleaving, was
conjectured in \cite[Conjecture~4.1]{AlexanderssonNabawanda2022}.

\begin{theorem}
  \label{thm:peakValueStability}
  The polynomial \(\widehat K_n(\boldsymbol{x})\) is stable for every
  \(n\geq1\).  Moreover, for every positive sequence
  \(\lambda_1,\lambda_2,\dotsc\) and every \(n>1\),
  \[
    \widehat K_{n-1}(\lambda_1t,\dotsc,\lambda_{n-1}t)
    \preceq
    \widehat K_n(\lambda_1t,\dotsc,\lambda_nt).
  \]
\end{theorem}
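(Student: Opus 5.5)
The plan is to derive a multivariate insertion recurrence for \(\widehat K_n\) and run an induction on \(n\). The induction proves two things together: stability of \(\widehat K_n\), and a proper-position relation between the two summands of the recurrence. Specializing that relation to the weighted diagonals \(x_j=\lambda_jt\) then gives the consecutive interleaving.

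\emph{Step 1 (recurrence).} Insert the maximum \(n\) into \(\sigma\in\mathfrak S_{n-1}\). The two end slots create and destroy no peak, since \(n\) at an end is not a peak and cannot make its neighbour one. Each of the \(n-2\) interior slots makes \(n\) a peak. Peaks of \(\sigma\) are pairwise non-adjacent, so the slots adjacent to peaks number exactly \(2\pk(\sigma)\). Such a slot destroys that peak, and the other interior slots destroy nothing. Because \(\widehat K_{n-1}\) is multiaffine and \(x_j\partial_j\) extracts the monomials containing \(x_j\), this gives
\[
  \widehat K_n=2\widehat K_{n-1}+x_n\,M_{n-1},
  \qquad
  M_{n-1}\coloneqq(n-2)\widehat K_{n-1}
    +2\sum_{j<n}(1-x_j)\,\partial_j\widehat K_{n-1}.
\]
Setting all \(x_j=t\) recovers \eqref{eq:peakRecurrence}, which is a useful sanity check.

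\emph{Step 2 (stability).} The polynomial \(\widehat K_n\) is affine in the new variable \(x_n\). By the Borcea--Br\"and\'en proper-position criterion, it is therefore stable if and only if \(\widehat K_{n-1}\) and \(M_{n-1}\) are stable and \(M_{n-1}\ll\widehat K_{n-1}\) is in proper position. Equivalently, \(\widehat K_{n-1}+w\,M_{n-1}\) must be stable in the variables \((x_1,\dots,x_{n-1},w)\).

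So the key claim is that \(f\mapsto f+w\bigl((n-2)f+2\sum_j(1-x_j)\partial_jf\bigr)\) maps stable multiaffine polynomials \emph{of the relevant support} to stable polynomials. The relevant support is degree at most \(\lfloor(n-2)/2\rfloor\), with no two adjacent indices. I would first try the multiaffine symbol theorem of Borcea--Br\"and\'en, evaluating the operator on \(\prod_{j<n}(x_j+z_j)\) in the same way Lemma~\ref{lem:eulerianOperatorPreservation} handles \(\Sigma_d(T_n)\). The symbol is
\[
  \prod_{j<n}(x_j+z_j)\Bigl(1+w\Bigl((n-2)+2\sum_{j<n}\frac{1-x_j}{x_j+z_j}\Bigr)\Bigr).
\]

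\emph{Main obstacle.} Since \(\frac{1-x}{x+z}=-1+\frac{1+z}{x+z}\), the bracket contains the negative constant \((n-2)-2(n-1)\). It does not obviously avoid zero on the upper half-plane. This is precisely because the operator preserves stability only on the restricted support, not on all multiaffine polynomials of degree at most \(n-1\).

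My first attempt would be to restrict the symbol to the true support. The lead candidate is to replace the full product by a truncated symmetric-function symbol reflecting the degree bound \(\pk\le\lfloor(n-2)/2\rfloor\). This mirrors how \(d\le n-2\) made the symbol of \(L_n\) stable in Lemma~\ref{lem:crossedCompletion}.

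Failing that, I would prove \(M_{n-1}\ll\widehat K_{n-1}\) directly by induction, carrying the stronger joint statement that the pair \((\widehat K_{n-1},M_{n-1})\) is in proper position. In that route, the recurrence for \(M_n\) in terms of \(\widehat K_{n-1}\) and \(M_{n-1}\) is handled with the cone property and with the real-direction restriction. Proper position is checked along every line \(x=a+tb\) with \(b>0\), where the one-variable argument of Theorem~\ref{thm:permutationPeaks}, sign alternation at the zeros using the factor \(t(1-t)\), should go through.

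\emph{Step 3 (weighted diagonals).} Once the proper position is established, specialize \(x_j=\lambda_jt\) with every \(\lambda_j>0\). This yields univariate real-rooted polynomials \(k=\widehat K_{n-1}(\lambda t)\) and \(m=M_{n-1}(\lambda t)\) with \(m\) interlacing \(k\) in the orientation given by proper position. Then \(\widehat K_n(\lambda t)=2k+\lambda_nt\,m\). Adjoining the zero root to \(m\) and applying the cone property gives \(k\preceq 2k+\lambda_n t m\), using \(k\preceq k\) and \(k\preceq tm\). The latter relation is read off from proper position exactly as in the final step of the proof of Lemma~\ref{lem:crossedCompletion}.
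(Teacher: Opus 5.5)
There is a genuine gap in Step~2. Your Steps~1 and~3 are fine and agree with the paper. The recurrence is the Alexandersson--Nabawanda insertion recurrence; the terms $j=1,2$ vanish because peak values are at least $3$. Once $\widehat K_n$ is known to be stable, the paper gets the weighted-diagonal relation exactly as you propose: $A_1\preceq A_0$ from Hermite--Biehler, then $A_0\preceq tA_1$, then the cone rule.

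\textbf{Where Step~2 fails.} Stability is never actually established. Your diagnosis of the symbol is right, and the obstruction is real. Take every $x_j=i$ and $z_j=-2+i\varepsilon$ with $0<\varepsilon<1$. Then $\operatorname{Im}\frac{1-x_j}{x_j+z_j}>0$, so the bracket vanishes at some $w$ with $\operatorname{Im}w>0$. Hence the support restriction must be used, but neither of your two routes uses it successfully.
\begin{itemize}
\item Route~(a) relies on a ``truncated symbol'' criterion for a total-degree bound in several variables. No such theorem is available.
\item Route~(b) fails as described. Along a line $x=a+tb$ with general $b>0$, the restriction of $\widehat K_{n-1}$ only sees the directional derivative $\sum_jb_j\partial_{x_j}$. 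So $M_{n-1}(a+tb)$ is not determined by $\widehat K_{n-1}(a+tb)$. There is therefore no univariate recurrence to which the Warren--Seneta sign-alternation argument could be applied.
\item The sign data that argument uses (positivity at $0$, negative zeros, the factor $t(1-t)$) also have no analogue for arbitrary $a$.
\end{itemize}
That argument transfers only along the diagonal, where it gives Theorem~\ref{thm:permutationPeaks}, not stability.

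\textbf{The missing idea: translate, then homogenize.} Put $\widetilde K_n(\boldsymbol u)=\widehat K_n(\mathbf 1+\boldsymbol u)$. Then $(1-x_j)\partial_{x_j}=-u_j\partial_{u_j}$. So your troublesome operator becomes minus the Euler operator, acting on the degree-$k$ homogeneous part $\widetilde K_{n-1,k}$ by the scalar $-k$. With $d=\lfloor(n-2)/2\rfloor$, the recurrence reads
\[
\widetilde K_n=\sum_{k=0}^{d}\bigl((n-2k)+(n-2-2k)u_n\bigr)\widetilde K_{n-1,k}.
\]
The paper then argues as follows.
\begin{itemize}
\item $\widetilde K_{n-1}$ is stable with nonnegative coefficients. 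By Borcea--Br\"and\'en--Liggett, its homogenization $\mathcal H=\sum_ky^{d-k}\widetilde K_{n-1,k}$ is stable.
\item The degree-dependent multipliers arise as $2\partial_y\bigl((y+u_n)\mathcal H\bigr)|_{y=1}$ for even $n$, and as $\partial_y\bigl((y+u_n)(y+1)\mathcal H\bigr)|_{y=1}$ for odd $n$.
\item Multiplication by stable factors, differentiation, and real specialization all preserve stability. The value $\widetilde K_n(\mathbf 0)=n!$ rules out the zero polynomial.
\end{itemize}
This is the concrete realization of your ``truncated symbol'' intuition: homogenization turns the total-degree bound $k\le d$ into a degree bound in the single variable $y$. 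It only works after the translation, because without it the part $\sum_j\partial_{x_j}$ of the operator is not a function of the degree.
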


\begin{proof}
  We first prove stability by induction on \(n\).  The induction step uses
  ordinary homogenization to rewrite the insertion recurrence as a stable
  product followed by differentiation and real specialization.  Alexandersson
  and Nabawanda's insertion recurrence
  \cite[Proposition~4.1]{AlexanderssonNabawanda2022} is
  \begin{equation}
    \widehat K_n
    =\bigl(2+(n-2)x_n\bigr)\widehat K_{n-1}
     +2x_n\sum_{j=3}^{n-1}(1-x_j)\partial_{x_j}\widehat K_{n-1}.
    \label{eq:peakValueRecurrence}
  \end{equation}
  We put
  \[
    \defin{\widetilde K_n(\boldsymbol{u})}
    \coloneqq\widehat K_n(\boldsymbol{1}+\boldsymbol{u}).
  \]
  The polynomial \(\widetilde K_{n-1}\) has nonnegative coefficients and
  degree \(d=\lfloor(n-2)/2\rfloor\).  Suppose inductively that it is stable,
  and decompose it into homogeneous parts
  \[
    \widetilde K_{n-1}=\sum_{k=0}^{d}\widetilde K_{n-1,k}.
  \]
  Equation~\eqref{eq:peakValueRecurrence} gives
  \begin{equation}
    \widetilde K_n=\sum_{k=0}^{d}
      \bigl((n-2k)+(n-2-2k)u_n\bigr)\widetilde K_{n-1,k}.
    \label{eq:translatedPeakValueRecurrence}
  \end{equation}
  Hence the ordinary degree-\(d\) homogenization
  \[
    \defin{\mathcal H(y,\boldsymbol{u})}
    \coloneqq\sum_{k=0}^{d}
      y^{d-k}\widetilde K_{n-1,k}(\boldsymbol{u})
  \]
  is stable by \cite[Theorem~4.5]{BorceaBrandenLiggett2009}.  A direct
  coefficient comparison with
  \eqref{eq:translatedPeakValueRecurrence} now gives
  \[
    \widetilde K_n=
    \begin{cases}
      \displaystyle
      2\left.\partial_y\bigl((y+u_n)\mathcal H\bigr)\right|_{y=1},
        & n\text{ even},\\[6pt]
      \displaystyle
      \left.\partial_y
        \bigl((y+u_n)(y+1)\mathcal H\bigr)\right|_{y=1},
        & n\text{ odd}.
    \end{cases}
  \]
  The factors \(y+u_n\) and \(y+1\) are stable.  Products,
  differentiation, and specialization at a real value preserve stability or
  yield the zero polynomial.  Since \(\widetilde K_n(\boldsymbol{0})=n!\),
  the latter alternative cannot occur.  The identities
  \(\widetilde K_1=1\) and \(\widetilde K_2=2\) begin the induction, and
  translating the variables back proves the first assertion.

  For the second assertion, assume \(n\geq3\), since \(n=2\) is immediate.
  We fix positive \(\lambda_j\), and write
  \[
    \defin{F(t,s)}
    \coloneqq\widehat K_n(\lambda_1t,\dotsc,
      \lambda_{n-1}t,\lambda_ns)
    =\defin{A_0(t)}+s\defin{A_1(t)}.
  \]
  This bivariate specialization is stable, and
  \(A_0=2\widehat K_{n-1}(\lambda_1t,\dotsc,\lambda_{n-1}t)\) by
  \eqref{eq:peakValueRecurrence}.  Hermite--Biehler gives
  \(A_1\preceq A_0\).  The displayed identity shows that \(A_0\) is nonzero,
  while the permutation \(1,n,2,3,\dotsc,n-1\) shows that \(A_1\) is nonzero.
  Their nonnegative coefficients therefore give positive leading
  coefficients, and their real zeros are nonpositive.  Wagner's
  multiplication rule
  \cite[Lemma~3.2(iii)]{AlexanderssonNabawanda2022} gives
  \(A_0\preceq tA_1\), and the cone rule
  \cite[Lemma~3.2(ii)]{AlexanderssonNabawanda2022} then gives
  \[
    A_0\preceq A_0+tA_1=F(t,t).
  \]
  Positive rescaling does not change the roots, so this is the asserted
  relation.  This completes the proof.
\end{proof}

\subsection*{Acknowledgements}

Generative-AI tools assisted with proof search and the preparation of Lean
formalizations, using the
\href{https://github.com/PerAlexandersson/RealRooted}{\texttt{RealRooted}}
library as a starting point.  The author reviewed the arguments and takes
responsibility for the final text.

\raggedright
\bibliographystyle{amsalpha}
\bibliography{eulerian_real_rootedness}

\end{document}